\documentclass[11pt, a4paper]{article}
\usepackage{amsmath, amsthm, amssymb, url}
\usepackage[margin=1in]{geometry}
\usepackage[english]{babel}
\usepackage{multirow}
\usepackage{authblk}
%
%\usetikzlibrary{arrows}
%\newcommand{\vertex}[3]{\node [vertex] (#1) at (#2, #3 * 1.7) {};}
%\newcommand{\edge}[2]{\draw (#1) -- (#2);}
%\newcommand{\arc}[2]{{\draw[-latex] (#1) edge (#2);}}

\newcommand{\End}{\mathrm{End}}

\newcommand{\id}{\mathrm{id}}

\newcommand{\CA}{\mathrm{CA}}
\newcommand{\ICA}{\mathrm{ICA}}

\newcommand{\Rank}{\mathrm{Rank}}

\newcommand{\Aut}{\mathrm{Aut}}

\theoremstyle{plain}

\newtheorem{corollary}{Corollary}
\newtheorem{lemma}{Lemma}
\newtheorem{proposition}{Proposition}
\newtheorem{theorem}{Theorem}

\theoremstyle{definition}

\newtheorem{example}{Example}
\newtheorem{remark}{Remark}

\begin{document}

\title{Generating infinite monoids of cellular automata}
\author{Alonso Castillo-Ramirez\footnote{Email: alonso.castillor@academicos.udg.mx},  \\
\small{Department of Mathematics, University Centre of Exact Sciences and Engineering,\\ University of Guadalajara, Guadalajara, Mexico.} }

\maketitle

\begin{abstract}
For a group $G$ and a set $A$, let $\End(A^G)$ be the monoid of all cellular automata over $A^G$, and let $\Aut(A^G)$ be its group of units. By establishing a characterisation of surjunctuve groups in terms of the monoid $\End(A^G)$, we prove that the rank of $\End(A^G)$ (i.e. the smallest cardinality of a generating set) is equal to the rank of $\Aut(A^G)$ plus the relative rank of $\Aut(A^G)$ in $\End(A^G)$, and that the latter is infinite when $G$ has an infinite decreasing chain of normal subgroups of finite index, condition which is satisfied, for example, for any infinite residually finite group. Moreover, when $A=V$ is a vector space over a field $\mathbb{F}$, we study the monoid $\End_{\mathbb{F}}(V^G)$ of all linear cellular automata over $V^G$ and its group of units $\Aut_{\mathbb{F}}(V^G)$. We show that if $G$ is an indicable group and $V$ is finite-dimensional, then $\End_{\mathbb{F}}(V^G)$ is not finitely generated; however, for any finitely generated indicable group $G$, the group $\Aut_{\mathbb{F}}(\mathbb{F}^G)$ is finitely generated if and only if $\mathbb{F}$ is finite. \\

\textbf{Keywords:} cellular automata; linear cellular automata; rank of a monoid; finitely generated monoid; surjunctive group.     
\end{abstract}

\section{Introduction}\label{intro}

Let $G$ be a group and $A$ a set. The \emph{full shift} $A^G$ is the set of all maps $x : G \to A$ equipped with the \emph{shift action} of $G$ on $A^G$ defined by $(g \cdot x) (h) := x(g^{-1}h)$ for every $g,h \in G$, $x \in A^G$. We endow $A^G$ with the \emph{prodiscrete topology}, which is the product topology of the discrete topology on $A$. A neighborhood base at $x \in A^G$ is given by 
 \[  V(x, S) = \{ y \in A^G :  y \vert_S =  x \vert_S \},  \]
 where $S$ runs over all finite subsets of $G$.   

A \emph{cellular automaton} over $A^G$ is a function $\tau : A^G \to A^G$ such that there is a finite subset $S \subseteq G$, called a \emph{memory set} of $\tau$, and a \emph{local function} $\mu : A^S \to A$ satisfying
\[ \tau(x)(g) = \mu (( g^{-1} \cdot x) \vert_{S}),  \quad \forall x \in A^G, g \in G.  \]
Cellular automata were invented by John von Neumann and Stanislaw Ulam in the decade of 1940's, and popularised by John Conway's \emph{Game of Life} in 1970. Nowadays, cellular automata have become fundamental objects in several areas of mathematics, such as symbolic dynamics, complexity theory, and complex systems modeling, and its theory has flourished due to its diverse connections with group theory, topology, and dynamics (e.g. see the highly-cited book \cite{CSC10} and references therein). 

When $A$ is finite, cellular automata are characterised by Curtis-Hedlund-Lyndon theorem (see \cite[Theorem 1.8.1]{CSC10}), which establishes that a function $\tau : A^G \to A^G$ is a cellular automaton if and only if it is $G$-equivariant (i.e. $\tau(g \cdot x) = g \cdot \tau(x)$, for all $x \in A^G$, $g \in G$) and continuous in the prodiscrete topology of $A^G$. 

For any group $G$ and set $A$, define
\[ \End(A^G) := \{ \tau : A^G \to A^G \; \vert \;  \tau \text{ is a cellular automaton} \}. \]
As the composition of any two cellular automata is a cellular automaton, the set $\End(A^G)$, equipped with the composition of functions, is a monoid (see \cite[Corollary 1.4.11]{CSC10}); its \emph{group of units} (i.e., the group of invertible elements) is denoted by $\Aut(A^G)$. Sometimes $\End(A^G)$ and $\Aut(A^G)$ are also denoted by $\CA(G;A)$ and $\ICA(G;A)$, respectively. 

When $G = \mathbb{Z}$, the group $\Aut(A^{\mathbb{Z}})$ is an important object of study in symbolic dynamics (e.g. see \cite{BLR88,R72,S18,S19}). Several interesting properties are known for $\Aut(A^{\mathbb{Z}})$: it is a countable group that is not finitely generated and it contains an isomorphic copy of every finite group, as well as the free group on a countable number of generators (see \cite{BLR88} and \cite[Sec. 13.2]{LM95}). Furthermore, Ryan's theorem establishes that the centre of $\Aut(A^{\mathbb{Z}})$ is isomorphic to $\mathbb{Z}$. Many of the previous results may be generalised when replacing $A^\mathbb{Z}$ by a susbet $S$ which is a \emph{mixing subshift of finite type}. However, most of the algebraic properties of $\Aut(A^{\mathbb{Z}})$ still remain unknown; for example, several of the important open problems in symbolic dynamics chosen by Mike Boyle revolve around these groups (\cite[Sec. 22]{B08}). 

It is a natural question to ask which of the algebraic properties described in the previous paragraph still hold when we replace $\mathbb{Z}$ by an arbitrary group $G$. A first important step would be to prove that the group $\Aut(A^G)$ is not finitely generated, which is also a standard question in geometric group theory. In \cite[Theorem 7]{C20}, it was shown that the monoid $\End(A^G)$ is not finitely generated when $G$ has an infinite descending chain of normal subgroups of finite index, and that large families of infinite groups satisfy this condition, such as infinite residually finite and infinite locally graded groups. No analogous result is yet known for $\Aut(A^G)$, except for very specific cases such as $G=\mathbb{Z}$. 

An important tool that was used in the proof of \cite[Theorem 7]{C20} is the \emph{rank of a monoid} (i.e., the smallest cardinality of a generating set of the monoid). This has been studied for several kinds of finite and infinite monoids, and, in particular, it has been studied for monoids of cellular automata over finite groups in \cite{CRG19,CRG16a,CRSA19}. 

In the first part if this paper, by providing a new characterisation of surjunctive groups, we show that, if $G$ is surjunctive, then, for every finite set $A$,
\begin{equation}\label{eq-rank-formula}
\Rank(\End(A^G)) = \Rank(\Aut(A^G)) + \Rank(\End(A^G) : \Aut(A^G)), 
\end{equation}
where $\Rank(\End(A^G) : \Aut(A^G))$ is the relative rank of $\Aut(A^G)$ in $\End(A^G)$, i.e. the smallest cardinality of a subset $W \subseteq \End(A^G)$ such that $W \cup \Aut(A^G)$ generates $\End(A^G)$. Furthermore, analogously to \cite[Theorem 7]{C20}, we establish that the relative rank of $\Aut(A^G)$ in $\End(A^G)$ is infinite whenever $2 \leq \vert A \vert < \infty$ and $G$ has an infinite descending chain of normal subgroups of finite index.

In the second part of this paper, we specialise on linear cellular automata when $A=V$ is a vector space over a field $\mathbb{F}$. In particular, we show that if $\mathbb{F}$ is infinite, or $V$ is infinite-dimensional, or $G$ is an indicable group (i.e. there exits an epimorphism from $G$ to $\mathbb{Z}$), then the monoid $\End_{\mathbb{F}}(V^G)$ of all linear cellular automata over $V^G$ is not finitely generated. However, we show that if $G$ is finitely generated indicable, then the group $\Aut_{\mathbb{F}}(\mathbb{F}^G)$ of all invertible linear cellular automata over $\mathbb{F}^G$ is finitely generated if and only if $\mathbb{F}$ is a finite field. These results settle the question of the finite generation for important classes of groups and monoids of linear cellular automata.

%%%%%%%%%%%%%%%%%%%%%%%%%%%%%%%%%%%%%%%%%%%%%%%%%%%

\section{Ranks of monoids} \label{sec:monoids}

In this section we review some basic properties of ranks of monoids that will be essential in the sequel. Although these results are not new, we have not found a self-contained source that includes all of them, so we believe it important to reproduce them here. 

Let $M$ be a monoid with identity $1$. An element $u \in M$ is a \emph{left unit} of $M$ if there exists $a \in M$ such that $au = 1$, and $a$ is called a \emph{left inverse} of $u$. Similarly, $u \in M$ is a \emph{right unit} of $M$ if there exists $b \in M$ such that $ub = 1$, and $b$ is called a \emph{right inverse} of $u$. The element $u \in M$ is simply called a \emph{unit} of $M$ if it is both a left and right unit of $M$.

\begin{lemma}\label{left-right}
If $u \in M$ has a left inverse $a \in M$ and a right inverse $b \in M$, then $a=b$. 
\end{lemma}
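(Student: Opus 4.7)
The plan is to exploit associativity of the monoid operation by evaluating a single expression, namely $a u b$, in two different ways. This is the standard trick for uniqueness-of-inverse results.

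First I would write down the two hypotheses as the equations $au = 1$ and $ub = 1$. Then I would compute the product $aub$ by bracketing on the left, obtaining $(au)b = 1 \cdot b = b$, using that $1$ is the identity of $M$. Next I would compute $aub$ by bracketing on the right, obtaining $a(ub) = a \cdot 1 = a$. Since $M$ is a monoid, associativity guarantees $(au)b = a(ub)$, so $a = b$.

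The ``main obstacle'' is really non-existent here: the statement is purely formal and uses nothing beyond the monoid axioms (associativity and the identity law). The only point to be mindful of is to make sure the identity $1$ used on both sides is the same element, which is automatic since a monoid has a unique two-sided identity. No appeal to the preceding material on cellular automata is needed.
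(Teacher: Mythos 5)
Your proof is correct and is essentially identical to the paper's argument: both evaluate $aub$ with the two bracketings and invoke associativity. (The paper's version contains a small typo, writing $a(bu)$ where $a(ub)$ is meant; your version states it correctly.)
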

\begin{proof}
By associativity, $(au)b = a(bu)$. As $(au)b = 1b = b$ and $a(bu) = a1 = a$, we conclude $a=b$.
\end{proof}

Let $U_L(M)$, $U_R(M)$ and $U(M)$ the sets of left, right and two-sided units of $M$, respectively. It is easy to verify that $U_L(M)$ and $U_R(M)$ are submonoids of $M$, and $U(M) = U_L(M) \cap U_R(M)$ is a subgroup of $M$.

An \emph{ideal} of $M$ is a subset $I \subseteq M$ such that $ra \in I$ and $ar \in I$ for all $r \in I$, $a \in M$. The monoid $M$ is called \emph{directly finite} if, for every $a,b \in M$, $ab = 1$ implies $ba=1$.

The proof of following result is routine (c.f. \cite[Lemma 2.3]{E2019}). 

\begin{lemma}\label{le-monoids}
Let $U_L = U_L(M)$, $U_R = U_R(M)$ and $U = U(M)$. The following are equivalent:
\begin{enumerate}
\item $M$ is directly finite.
\item $U_L = U$.
\item $U_R= U$.
\item $M \setminus U$ is an ideal of $M$. 
\end{enumerate}
\end{lemma}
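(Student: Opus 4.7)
The plan is to establish all four equivalences through the cyclic chain $(1) \Rightarrow (2) \Rightarrow (4) \Rightarrow (3) \Rightarrow (1)$, leaning on Lemma \ref{left-right} whenever we need to promote a one-sided inverse to the two-sided inverse of a genuine unit.

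First I would prove $(1) \Rightarrow (2)$: for $u \in U_L$ with left inverse $a$, direct finiteness applied to $au = 1$ yields $ua = 1$, so $u \in U$; the reverse inclusion $U \subseteq U_L$ is definitional. The mirror argument handles $(3) \Rightarrow (1)$: from $ab = 1$ the element $a$ lies in $U_R = U$, so a two-sided inverse $a^{-1}$ exists, and cancellation yields $b = a^{-1}$ and hence $ba = 1$.

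Next I would handle $(2) \Rightarrow (4)$, where the real subtlety lies. Pick $r \in M \setminus U$ and $a \in M$. If $ar \in U$ has two-sided inverse $c$, then $(ca)r = c(ar) = 1$ shows $r \in U_L = U$, a contradiction. The case $ra \in U$ is trickier because the identity $c(ra) = (cr)a = 1$ gives information about $a$ rather than $r$: it only says $a$ is a left unit, hence by (2) a two-sided unit with inverse $a^{-1}$. Lemma \ref{left-right} applied to the left inverse $cr$ and the right inverse $a^{-1}$ of $a$ then forces $cr = a^{-1}$, and from $(ra)c = r(ac) = 1$ together with $(ac)r = a(cr) = a a^{-1} = 1$ one sees that $ac$ is a two-sided inverse of $r$, contradicting $r \notin U$.

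Finally, $(4) \Rightarrow (3)$ is immediate: for $u \in U_R$ with $ub = 1$, if $u \in M \setminus U$ then the ideal property forces $ub \in M \setminus U$, contradicting $ub = 1 \in U$; hence $u \in U$, giving $U_R = U$. The main obstacle throughout is the $ra \in U$ subcase of $(2) \Rightarrow (4)$, since hypothesis (2) only directly controls left units; circumventing this requires first lifting $a$ to a full unit and using Lemma \ref{left-right} to identify the auxiliary product $cr$ with $a^{-1}$ before the final contradiction can be extracted. Everything else is routine cancellation.
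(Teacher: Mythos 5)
Your proof is correct. The paper itself omits the argument, declaring it routine and citing East's Lemma~2.3, so there is no in-paper proof to compare against; your cyclic chain $(1)\Rightarrow(2)\Rightarrow(4)\Rightarrow(3)\Rightarrow(1)$ is complete and valid, and you correctly identify and resolve the only non-trivial point, namely the $ra\in U$ subcase of $(2)\Rightarrow(4)$, where $c(ra)=1$ controls $a$ rather than $r$ and one must first upgrade $a$ to a unit and use Lemma~\ref{left-right} to identify $cr$ with $a^{-1}$ before exhibiting $ac$ as a two-sided inverse of $r$.
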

%\begin{proof}
%That (1.) implies (2.) follows directly from the definition of directly finite monoid. 
%
%Suppose that (2.) holds, and let $u \in U_R$. There exists $a \in M$ such that $ua = 1$, which implies that $a \in U_L = U$. Hence, $a$ must have a right inverse, which by Lemma \ref{left-right} must be $u$. Thus, $ua=1$, which shows that $u \in U$, and $U_R=U$.
%
%Suppose that (3.) holds. Let $r \in M \setminus U$ and $m \in M$. Assume that $rm \in U$. Then, there exists $a \in M$ such that $1 = (rm)a= r(ma)$, which implies that $r \in U_R=U$, a contradiction. Hence, $rm \in M \setminus U$. Now, assume that $mr \in U$. Then, there exists $a \in M$ such that $1 = (mr)a = m(ra)$, which implies that $m \in U_R = U$. By Lemma \ref{left-right}, $1 = (ra)m = r(am)$, which implies that $r \in U_R = U$, so we have again a contradiction. Hence, $mr \in M \setminus U$, and $M \setminus U$ is an ideal of $M$.
%
%Suppose that (4.) holds, and assume that $ab=1$ for $a, b \in M$. If $b \in M \setminus U$, then $1 = ab \in M \setminus U$, which is a contradiction. Therefore, $b \in U$, which imples that $ba = 1$ by Lemma \ref{left-right}. 
%\end{proof}

\begin{example}
Any finite monoid $M$ is directly finite. To see this, we may use the generalized Cayley's theorem to identify $M$ with a submonoid of a full transformation monoid, and use the fact that for self-maps of a finite set being injective, surjective, right unit, and left unit are all equivalent. 
\end{example}

Given a subset $T$ of $M$, the \emph{submonoid generated} by $T$, denoted by $\langle T \rangle$, is the smallest submonoid of $M$ that contains $T$; this is equivalent as defining $\langle T \rangle := \{ t_1 t_2 \dots t_k \in M : t_i \in T, \ \forall i, \ k \geq 0 \}$. We say that $T$ is a \emph{generating set of $M$} if $M= \langle T \rangle$. The monoid $M$ is said to be \emph{finitely generated} if it has a finite generating set. The \emph{rank} of $M$ is the minimal cardinality of a generating set:
\[ \Rank(M) := \min\{\vert T \vert : M= \langle T \rangle \}. \]	

The $\Rank$ function on monoids does not behave well when taking submonoids or subgroups: in other words, if $N$ is a submonoid of $M$, there may be no relation between $\Rank(N)$ and $\Rank(M)$. It is even possible that $M$ is finitely generated but $N$ is not finitely generated (such is the case of the free group on two symbols and its commutator subgroup).

For any subset $R$ of a monoid $M$, the \emph{relative rank} of $R$ in $M$ is
\[ \Rank(M:R) = \min \{ \vert W \vert : M = \langle R \cup W \rangle  \}. \]

The next result was shown in \cite[Proposition 4]{E2012} for factorizable inverse monoids, but its proof may be identically adapted for any monoid in which $M \setminus U(M)$ is an ideal: the key fact used in the proof is that if $T$ is a generating set for $M$, then $T$ must contain a generating set for the group of units of $M$.    

\begin{lemma}\label{rank-formula}
Let $M$ be a monoid and $U$ its group of units. If $M$ is directly finite, then
\[ \Rank(M) = \Rank(U) + \Rank(M:U). \]
\end{lemma}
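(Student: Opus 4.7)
The plan is to prove the rank formula by showing both inequalities, using crucially the characterisation of direct finiteness from Lemma \ref{le-monoids}, specifically that $M \setminus U$ is an ideal of $M$.

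For the easy direction $\Rank(M) \leq \Rank(U) + \Rank(M:U)$, I would take a generating set $V$ of $U$ with $|V| = \Rank(U)$ and a subset $W \subseteq M$ with $|W| = \Rank(M:U)$ such that $M = \langle U \cup W \rangle$. Since every element of $U$ is a product of elements of $V$, any expression using $U \cup W$ can be rewritten as an expression using $V \cup W$, so $M = \langle V \cup W \rangle$, giving the inequality.

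For the harder direction $\Rank(M) \geq \Rank(U) + \Rank(M:U)$, let $T$ be a generating set of $M$ with $|T| = \Rank(M)$, and split it as $T = T_U \sqcup T_N$ where $T_U := T \cap U$ and $T_N := T \setminus U \subseteq M \setminus U$. The key claim, as indicated by the remark preceding the statement, is that $T_U$ must generate $U$. To see this, take any $u \in U$ and write $u = t_1 t_2 \cdots t_k$ with each $t_i \in T$. If some $t_i$ belonged to $T_N$, then since $M \setminus U$ is an ideal (by Lemma \ref{le-monoids}, using direct finiteness), the whole product $t_1 \cdots t_k$ would lie in $M \setminus U$, contradicting $u \in U$. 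Hence all $t_i \in T_U$, so $u \in \langle T_U \rangle$, and therefore $|T_U| \geq \Rank(U)$. On the other hand, since $T_U \subseteq U$ and $\langle T \rangle = M$, we have $M = \langle U \cup T_N \rangle$, so $|T_N| \geq \Rank(M:U)$. Adding these gives $|T| = |T_U| + |T_N| \geq \Rank(U) + \Rank(M:U)$.

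The main subtlety, and the only place where the directly finite hypothesis is used, is in invoking the ideal property of $M \setminus U$ to force a product containing any non-unit element to itself be a non-unit. Without this, a non-unit generator could still combine with others to produce units, and the clean split of a generating set would fail. Everything else is essentially formal manipulation of generating sets, and no delicate calculation is required.
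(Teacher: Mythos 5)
Your proof is correct and follows exactly the route the paper intends: the paper does not write out the argument but refers to \cite[Proposition 4]{E2012}, noting that the proof adapts verbatim to any monoid in which $M \setminus U$ is an ideal and that the key fact is precisely the one you establish, namely that any generating set of $M$ must contain a generating set of $U$. Your treatment of both inequalities, including the use of Lemma \ref{le-monoids} to get the ideal property from direct finiteness, is sound.
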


The following result is a useful tool to find lower bounds on ranks and relative ranks. 

\begin{lemma}\label{le-epi-mon}
Let $\phi : M_1 \to M_2$ be a epimorphism of monoids. Then:
\begin{enumerate}
\item $\Rank(M_2) \leq \Rank(M_1)$.
\item $\Rank(M_2 : U_2) \leq \Rank(M_1 : U_1)$, where $U_i$ is the group of units of $M_i$.
\end{enumerate}
\end{lemma}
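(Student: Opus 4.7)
The plan is to prove both parts by simply pushing generating sets forward through $\phi$, using the fact that an epimorphism is surjective and respects the monoid identity and composition.

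For part (1), I would start by taking a generating set $T \subseteq M_1$ with $\vert T \vert = \Rank(M_1)$, and verify that $\phi(T)$ generates $M_2$. Indeed, by surjectivity every $y \in M_2$ has the form $y = \phi(x)$ for some $x \in M_1$, and writing $x = t_1 t_2 \cdots t_k$ with $t_i \in T$ gives $y = \phi(t_1) \phi(t_2) \cdots \phi(t_k) \in \langle \phi(T) \rangle$. Hence $\Rank(M_2) \leq \vert \phi(T) \vert \leq \vert T \vert = \Rank(M_1)$.

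For part (2), the key preliminary observation is that $\phi(U_1) \subseteq U_2$: if $u \in U_1$ with inverse $u^{-1}$, then applying $\phi$ to $u u^{-1} = u^{-1} u = 1$ yields $\phi(u) \phi(u^{-1}) = \phi(u^{-1}) \phi(u) = 1$, so $\phi(u) \in U_2$. Next I would take $W \subseteq M_1$ with $\vert W \vert = \Rank(M_1 : U_1)$ and $M_1 = \langle U_1 \cup W \rangle$, and show that $M_2 = \langle U_2 \cup \phi(W) \rangle$. Given $y \in M_2$, surjectivity supplies $x \in M_1$ with $\phi(x) = y$, and factoring $x$ as a product of elements from $U_1 \cup W$ and then applying $\phi$ expresses $y$ as a product of elements from $\phi(U_1) \cup \phi(W) \subseteq U_2 \cup \phi(W)$. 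This yields $\Rank(M_2 : U_2) \leq \vert \phi(W) \vert \leq \vert W \vert = \Rank(M_1 : U_1)$.

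Both arguments are essentially one-line routine verifications; no step presents a substantive obstacle. The only subtlety worth flagging is the observation that epimorphisms of monoids must preserve the identity (by definition of monoid homomorphism) and hence carry units to units, which is what makes the inclusion $\phi(U_1) \subseteq U_2$ work in part (2).
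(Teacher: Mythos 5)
Your proposal is correct and follows essentially the same argument as the paper: push a minimal generating set (respectively, a minimal relative generating set together with $U_1$) forward through the surjection $\phi$ and use $\phi(U_1) \subseteq U_2$. The only difference is that you spell out the verification that $\phi$ carries units to units, which the paper asserts without proof.
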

\begin{proof}
For part (1.), let $T$ be a generating set for $M_1$ of smallest cardinality. Then $\phi(T)$ is a generating set for $M_2$, and
\[ \Rank(M_1) = \vert T \vert \geq \vert \phi(T) \vert  \geq \Rank(M_2). \]
For part (2.), let $W$ be a set of smallest cardinality such that $\langle U_1 \cup W \rangle = M_1$. We show that $U_2 \cup \phi(W)$ generates $M_2$. For every $m_2 \in M_2$ there exists $m_1 \in M_1$ such that $\phi(m_1) = m_2$. Then, there exist $a_1, a_2, \dots, a_t \in U_1 \cup W$ such that $m_1 =a_1a_2 \dots a_t$. Hence,
\[ m_2 = \phi(m_1) =  \phi(a_1) \phi(a_2) \dots \phi(a_t), \]
where $\phi(a_1), \phi(a_2), \dots, \phi(a_t) \in \phi(U_1) \cup \phi(W) \subseteq  U_2 \cup \phi(W)$, since $\phi(U_1) \subseteq U_2$. Therefore,
\[ \Rank(M_1 : U_1) = \vert W \vert \geq \vert \phi(W) \vert \geq \Rank(M_2 : U_2).   \]
\end{proof}

%%%%%%%%%%%%%%%%%%%%%
\section{Generating monoids of cellular automata} \label{CA}

In this section we study the ranks of $\End(A^G)$ and $\Aut(A^G)$; in particular, we want to decide when these ranks are finite or infinite.

\begin{lemma}\label{le-1}
Let $G$ be a group and $A$ a set. If $\End(A^G)$ is finitely generated, then $A$ is finite and $G$ is finitely generated. 
\end{lemma}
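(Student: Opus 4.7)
The plan is to prove the contrapositive, handling the two conclusions separately and assuming $|A| \geq 2$ (the case $|A|=1$ gives the trivial monoid, where the statement about $G$ would be vacuous). Both parts exploit the $G$-equivariance of cellular automata, which follows directly from the defining formula $\tau(x)(g) = \mu((g^{-1}\cdot x)|_S)$ without any assumption on $A$, together with Lemma~\ref{le-epi-mon}.

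To force $A$ to be finite, I will construct a surjective monoid homomorphism $\pi : \End(A^G) \to A^A$ by $\pi(\tau)(a) := \tau(a^G)(e)$, where $a^G$ denotes the constant configuration with value $a$. Since $a^G$ is $G$-invariant, so is $\tau(a^G)$ by $G$-equivariance; hence $\tau(a^G)$ is itself a constant configuration, which makes $\pi$ well-defined. Verifying that $\pi$ is a homomorphism uses the same constancy (so that $\tau_1\tau_2(a^G)(e) = \tau_1((\pi(\tau_2)(a))^G)(e) = \pi(\tau_1)(\pi(\tau_2)(a))$), and surjectivity is immediate by lifting any $f \in A^A$ to the cellular automaton with memory set $\{e\}$ and local function $f$. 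By Lemma~\ref{le-epi-mon}(1), finite generation of $\End(A^G)$ implies finite generation of $A^A$; since any finitely generated monoid is at most countable but $|A^A| \geq 2^{|A|}$ is uncountable whenever $A$ is infinite, $A$ must be finite.

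To force $G$ to be finitely generated, let $T \subset \End(A^G)$ be a finite generating set, let $S_\tau$ denote a memory set of $\tau \in T$, and set $H := \langle \bigcup_{\tau \in T} S_\tau \rangle \leq G$, a finitely generated subgroup. A direct calculation from the defining formula shows that if $\tau_1, \tau_2$ have memory sets $S_1, S_2$ then $\tau_1 \tau_2$ has memory set contained in $S_1 \cdot S_2 = \{s_1 s_2 : s_i \in S_i\}$; iterating, every element of $\langle T \rangle = \End(A^G)$ admits a memory set contained in $H$. On the other hand, for each $g \in G$ the shift $\sigma_g : x \mapsto (h \mapsto x(hg))$ is a cellular automaton, and since $|A| \geq 2$ its minimal memory set is exactly $\{g\}$. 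Therefore $g \in H$ for every $g \in G$, which gives $G = H$ finitely generated. The main technical point throughout is the memory-set composition formula; everything else is bookkeeping.
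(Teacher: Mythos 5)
Your proof is correct, but it is genuinely different from (and more self-contained than) the paper's, which disposes of both halves by citation: for infinite $A$ the paper observes that $\End(A^G)$ is uncountable because there are uncountably many local functions, and then invokes the fact (from East's paper on factorizable inverse monoids) that an uncountable monoid has rank equal to its cardinality; for non-finitely-generated $G$ it simply cites Remark 1 of \cite{C20}. You instead build a concrete monoid epimorphism $\pi:\End(A^G)\to A^A$ via constant configurations and use the elementary fact that a finitely generated monoid is countable while $A^A$ is not when $A$ is infinite; and for the second half you run a memory-set bookkeeping argument (products of memory sets are memory sets of composites, and the minimal memory set of the shift $\sigma_g$ is $\{g\}$ when $\vert A\vert\ge 2$), which is essentially the content of the cited remark in \cite{C20} made explicit. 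Your approach buys a self-contained proof using only the definition of a cellular automaton and Lemma \ref{le-epi-mon}; the paper's buys brevity at the cost of two external references. Your opening caveat about $\vert A\vert\le 1$ is well taken — in that degenerate case $\End(A^G)$ is trivial and hence finitely generated even when $G$ is not, so the lemma really does need the standing assumption $\vert A\vert\ge 2$, which the paper only imposes in the paragraph following the lemma. One tiny point to tighten: when you conclude $g\in H$ from "$\sigma_g$ admits a memory set contained in $H$", you are implicitly using that the minimal memory set is contained in \emph{every} memory set (Proposition 1.5.1--1.5.2 of \cite{CSC10}); stating that explicitly would close the last gap.
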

\begin{proof}
First note that, if $A$ is an infinite set, then $\End(A^G)$ is uncountable, as there are uncountably many local functions $\mu : A^S \to A$, for any finite subset $S \subseteq G$. It follows that $\Rank(\End(A^G)) = \vert \End(A^G) \vert$ (see \cite[p. 268]{E2012}). If $G$ is not finitely generated, then $\End(A^G)$ is not finitely generated by \cite[Remark 1]{C20}. 
\end{proof}

If $A$ is finite with less than two elements, then $\End(A^G)$ is a trivial monoid; hence, we shall assume that $A$ has at least two elements. If both $A$ and $G$ are finite, then $\End(A^G)$ is finite, so its rank must be finite. Finally, if $A$ is finite with at least two elements and $G$ is infinite finitely generated, then both $\End(A^G)$ and $\Aut(A^G)$ are countably infinite (as the set of local functions $\mu : A^S \to A$, where $S$ runs among all finite subsets of $G$, is countably infinite). Hence, their ranks might be finite or countably infinite in this last case. 

Our goal now is to give a necessary and sufficient condition for the monoid $\End(A^G)$ to be directly finite, as this will allow us to apply Lemma \ref{rank-formula}.

\begin{lemma}\label{le-prop}
Let $G$ be a group and $A$ a finite set. Let $\tau : A^G \to A^G$ be a cellular automaton. 
\begin{enumerate}
\item If $\tau$ is a right unit of $\End(A^G)$, then $\tau$ is surjective. 
\item $\tau$ is a left unit of $\End(A^G)$ if and only if $\tau$ is injective.
\item $\tau$ is a unit of $\End(A^G)$ if and only if $\tau$ is bijective.
\end{enumerate}
\end{lemma}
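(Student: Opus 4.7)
For part (1) and the forward directions of (2) and (3), I would simply manipulate the defining equations of left and right units. If $\tau\sigma = \id$ with $\sigma \in \End(A^G)$, then every $y \in A^G$ equals $\tau(\sigma(y))$, giving surjectivity of $\tau$; if $\sigma\tau = \id$, then $\tau(x) = \tau(x')$ forces $x = \sigma\tau(x) = \sigma\tau(x') = x'$, so $\tau$ is injective; and a unit $\tau$ has a two-sided monoid inverse $\sigma$ with $\sigma\tau = \tau\sigma = \id$, hence is set-theoretically bijective.

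The main technical step is the converse of (2). Assume $\tau$ is injective. Since $A$ is finite, $A^G$ is compact Hausdorff in the prodiscrete topology and $\tau$ is continuous (the easy direction of Curtis--Hedlund--Lyndon), so $\tau$ restricts to a homeomorphism onto $\tau(A^G)$ and the inverse $\tau^{-1} : \tau(A^G) \to A^G$ is continuous and $G$-equivariant. Because $A$ is discrete, the map $y \mapsto \tau^{-1}(y)(1_G)$ is locally constant on $\tau(A^G)$; by compactness there is a single finite $S \subseteq G$ such that $y|_S = y'|_S$ forces $\tau^{-1}(y)(1_G) = \tau^{-1}(y')(1_G)$ for all $y,y' \in \tau(A^G)$. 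This yields a well-defined partial function on $\{y|_S : y \in \tau(A^G)\}$, which I would extend arbitrarily to a total function $\tilde\mu : A^S \to A$, producing a cellular automaton $\sigma$ on $A^G$ with memory set $S$ and local function $\tilde\mu$. Using $G$-equivariance of $\tau$ and $G$-invariance of $\tau(A^G)$, one checks $\sigma(\tau(x))(g) = \tilde\mu((g^{-1}\cdot\tau(x))|_S) = \tau^{-1}(g^{-1}\cdot\tau(x))(1_G) = x(g)$ for all $x \in A^G$ and $g \in G$, so $\sigma\tau = \id$ and $\tau$ is a left unit.

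For the converse of (3), assume $\tau$ is bijective. Then $\tau^{-1}$ is continuous (continuous bijection on a compact Hausdorff space) and $G$-equivariant (immediate from equivariance of $\tau$), so Curtis--Hedlund--Lyndon gives $\tau^{-1} \in \End(A^G)$, and hence $\tau$ is a unit. The main obstacle I anticipate is the bookkeeping in the converse of (2): one must ensure that the arbitrary extension $\tilde\mu$ does not spoil the identity $\sigma\tau = \id$, which works precisely because $g^{-1}\cdot\tau(x) \in \tau(A^G)$ for every $g$ and $x$, so the computation of $\sigma(\tau(x))(g)$ only ever consults the coherently-defined part of $\tilde\mu$.
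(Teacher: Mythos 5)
Your proof is correct, and it follows the same high-level strategy as the paper for the substantive converse of (2) --- namely, invert $\tau$ on its image, observe that this inverse is continuous and $G$-equivariant on the closed $G$-invariant set $\tau(A^G)$, and then extend it to a cellular automaton on all of $A^G$ --- but you execute the extension step differently. The paper simply cites an extension lemma (\cite[Lemma 7.8.2]{CSC10}: a continuous $G$-equivariant map on a closed $G$-invariant subset of $A^G$ extends to a cellular automaton), and separately verifies continuity of $\phi=\tau^{-1}$ by computing $\phi^{-1}(V(x,S))=\tau(A^G)\cap V(\tau(x),S)$ without invoking compactness. You instead prove the needed extension from scratch: you use compactness of $A^G$ (here is where finiteness of $A$ enters) to get a homeomorphism onto the image, extract a uniform memory set $S$ for the locally constant map $y\mapsto\tau^{-1}(y)(1_G)$ via a finite subcover, extend the resulting partial local rule arbitrarily to $\tilde\mu:A^S\to A$, and check that $\sigma\tau=\id$ only ever evaluates $\tilde\mu$ on patterns coming from $\tau(A^G)$, thanks to $G$-invariance of the image --- which is exactly the point you correctly flag as the potential obstacle. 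Similarly, for the converse of (3) you give the direct compact-Hausdorff plus Curtis--Hedlund--Lyndon argument where the paper cites \cite[Theorem 1.10.2]{CSC10}. The trade-off is that your version is self-contained and makes the role of compactness explicit, at the cost of reproving standard results; the paper's version is shorter but leans on two black-box citations. Both are valid.
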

\begin{proof}
\begin{enumerate}
\item If $\tau$ is a right unit, there exists $\sigma \in \End(A^G)$ such that $\tau \sigma = 1$. Now, clearly $\sigma(x)$ is a preimage of any $x \in A^G$ under $\tau$, which means that $\tau$ is surjective. 
 
\item If $\tau$ is a left unit, there exists $\sigma \in \End(A^G)$ such that $\sigma \tau = 1$. If $\tau(x) = \tau(y)$, we apply $\sigma$ on both sides to conclude that $x=y$, so $\tau$ is injective. 

For the converse, suppose that $\tau$ is injective. Define a function $\phi : \tau(A^G) \to A^G$ as follows: for every $ x \in \tau(A^G)$, let $\phi(x)$ be the unique preimage of $x$ under $\tau$. Hence, $\tau(\phi(x)) = x$, for all $x \in \tau(A^G)$, and $\phi(\tau(z)) = z$, for all $z \in A^G$. We shall apply \cite[Lemma 7.8.2]{CSC10}, which establishes that if $X \subseteq A^G$ is a closed $G$-invariant set (i.e. for all $g \in G$, $x \in X$, we have $g \cdot x \in X$) and $f : X \to A^G$ is a continuous $G$-equivariant map, then there exists $\sigma \in \End(A^G)$ such that $\sigma \vert_X = f$. 

By \cite[Lemma 3.3.2.]{CSC10}, the set $\tau(A^G)$ is closed in $A^G$, and, using the $G$-equivariance of $\tau$ it is easy to check that $\tau(A^G)$ is a $G$-equivariant set.

Let $g \in G$ and $x \in \tau(A^G)$. Then $g \cdot \phi(x)$ is the preimage of $g \cdot x$ under $\tau$, because $\tau(g \cdot \phi(x)) = g \cdot \tau(\phi(x)) = g \cdot x$. This means that $\phi(g \cdot x) = g \cdot \phi(x)$, which shows that $\phi$ is $G$-equivariant. To show continuity, observe that for any finite $S \subseteq G$ and $x \in A^G$,
\begin{align*}
\phi^{-1}(V(x,S)) &= \{ y \in \tau(A^G) : \phi(y) \vert_S = x \vert_S  \}\\
& = \{ y \in \tau(A^G) : y \vert_S = \tau(x) \vert_S  \} \\
& = \tau(A^G) \cap V( \tau(x), S), 
\end{align*}
which is an open set in $\tau(A^G)$. Therefore, $\phi$ is continuous. Using \cite[Lemma 7.8.2]{CSC10}, we find $\sigma \in \End(A^G)$ such that $\sigma \vert_{\tau(A^G)} = \phi$. Therefore, $\sigma \tau (z) = z$, for all $z \in A^G$, which proves that $\tau$ is a left unit. 

\item See \cite[Theorem 1.10.2.]{CSC10}.
\end{enumerate}
\end{proof}

A group $G$ is \emph{surjunctive} if for every finite set $A$, every injective cellular automaton $\tau : A^G \to A^G$ is surjective (and hence bijective). Many groups are known to be surjunctive, including the families of locally finite, residually finite, abelian, free, amenable, and sofic groups (see Chapters 3 and 7 in \cite{CSC10}). In fact, it is an open question whether there exists a group that is not surjunctive. 

\begin{theorem}\label{th-surjunctive}
A group $G$ is surjunctive if and only if, for every finite $A$, the monoid $\End(A^G)$ is directly finite.  
\end{theorem}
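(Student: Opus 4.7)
The plan is to observe that this theorem is essentially a direct consequence of Lemma \ref{le-monoids} combined with Lemma \ref{le-prop}, after unwinding the definitions. Surjunctivity of $G$ asserts that for every finite $A$, every injective cellular automaton $\tau : A^G \to A^G$ is surjective (equivalently, bijective). Direct finiteness of $\End(A^G)$, by Lemma \ref{le-monoids}, is equivalent to the equality $U_L(\End(A^G)) = U(\End(A^G))$, i.e.\ every left unit is a two-sided unit.

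First, I would use Lemma \ref{le-prop}(2) to identify $U_L(\End(A^G))$ with the set of injective cellular automata, and Lemma \ref{le-prop}(3) to identify $U(\End(A^G))$ with the set of bijective cellular automata. Consequently, the condition $U_L(\End(A^G)) = U(\End(A^G))$ is equivalent to the statement that every injective cellular automaton over $A^G$ is bijective, which in turn is equivalent to every injective cellular automaton over $A^G$ being surjective.

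Then I would close the argument by quantifying over all finite sets $A$. For the forward direction, if $G$ is surjunctive, then for each finite $A$ every injective $\tau \in \End(A^G)$ is bijective, hence $U_L(\End(A^G)) = U(\End(A^G))$, and Lemma \ref{le-monoids} yields that $\End(A^G)$ is directly finite. For the converse, if $\End(A^G)$ is directly finite for every finite $A$, then the same chain of equivalences read in reverse shows every injective cellular automaton is bijective, hence surjective, so $G$ is surjunctive.

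There is no real obstacle: both directions of the equivalence are already encoded in the preceding two lemmas, and the theorem merely packages them together. The only point requiring care is to remember that the definition of surjunctivity quantifies over all finite alphabets $A$, which matches exactly the ``for every finite $A$'' in the statement; no additional hypothesis on $G$ (like finite generation) needs to be invoked.
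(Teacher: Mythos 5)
Your proposal is correct and follows essentially the same route as the paper's own proof: both directions reduce to combining Lemma \ref{le-prop}(2) and (3) with the characterisation of direct finiteness as $U_L = U$ from Lemma \ref{le-monoids}. Your phrasing as an identification of the sets $U_L(\End(A^G))$ and $U(\End(A^G))$ with the injective and bijective cellular automata, respectively, is just a repackaging of the paper's pointwise argument.
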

\begin{proof}
Suppose that $G$ is surjunctive and let $\tau \in \End(A^G)$ be a left unit. By Lemma \ref{le-prop} (2.), $\tau$ is injective, which implies that $\tau$ is bijective, as $G$ is surjunctive. By Lemma \ref{le-prop} (3.), $\tau$ is a unit. It follows that $\End(A^G)$ is directly finite by Lemma \ref{le-monoids}.

Conversely, suppose that $\End(A^G)$ is directly finite. Let $\tau \in \End(A^G)$ be injective. By Lemma \ref{le-prop} (2.), $\tau$ is a left unit, and by Lemma \ref{le-monoids}, $\tau$ is a unit. Therefore, $\tau$ is bijective by Lemma \ref{le-prop} (3.).
\end{proof}

\begin{remark}
The converse of Lemma \ref{le-prop} (1.) is false. For example, it is known (see \cite[Example 3.8.8.]{CSC10}) that the cellular automaton $\tau \in \End(\{ 0,1 \}^\mathbb{Z})$ defined by $\tau(x)(i) = x(i+1) + x(i) \mod(2)$, for all $x \in \{ 0,1 \}^{\mathbb{Z}}$, $i \in \mathbb{Z}$, is surjective but not injective. If $\tau$ is a right unit, by the previous theorem it is a unit, as $\mathbb{Z}$ is surjunctive, and therefore it is bijective, which is a contradiction.    
\end{remark}

\begin{lemma}
Let $G$ be a surjunctive group and $A$ a finite set. Then,
\[ \Rank(\End(A^G)) = \Rank(\Aut(A^G)) + \Rank(\End(A^G) : \Aut(A^G)). \]
\end{lemma}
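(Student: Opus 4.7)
The statement is a direct consequence of machinery already assembled in the excerpt, so the plan is essentially to verify that the hypotheses of the general rank formula (Lemma \ref{rank-formula}) are satisfied in this setting and then invoke it.

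First I would identify the group of units of $\End(A^G)$: by Lemma \ref{le-prop} (3.) a cellular automaton is a two-sided unit of $\End(A^G)$ precisely when it is bijective, so the group of units $U(\End(A^G))$ coincides with $\Aut(A^G)$. Next, I would check direct finiteness: since $G$ is surjunctive, Theorem \ref{th-surjunctive} applies and gives that $\End(A^G)$ is directly finite for every finite $A$.

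With these two facts in hand, I would simply apply Lemma \ref{rank-formula} to $M = \End(A^G)$ with $U = \Aut(A^G)$ to conclude
\[ \Rank(\End(A^G)) = \Rank(\Aut(A^G)) + \Rank(\End(A^G) : \Aut(A^G)), \]
which is the desired equality.

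There is no real obstacle here; the work was done upstream. The only potential pitfall would be a subtle mismatch between the notion of ``group of units'' used in Lemma \ref{rank-formula} and the set $\Aut(A^G)$ appearing in the statement, but Lemma \ref{le-prop} (3.) explicitly identifies them. So the proof reduces to the two-line application indicated above.
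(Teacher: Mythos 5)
Your proposal is correct and is exactly the paper's argument: the paper's proof is the one-line ``The result follows by Theorem \ref{th-surjunctive} and Lemma \ref{rank-formula}.'' Your additional verification that $U(\End(A^G)) = \Aut(A^G)$ via Lemma \ref{le-prop}~(3.) is a sound and worthwhile detail that the paper leaves implicit.
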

\begin{proof}
The result follows by Theorem \ref{th-surjunctive} and Lemma \ref{rank-formula}.
\end{proof}
\begin{corollary}
Let $G$ be a surjunctive group and $A$ a finite set. Then,
\[  \Rank(\Aut(A^G)) \leq  \Rank(\End(A^G)) . \]
In particular, if $\End(A^G)$ is finitely generated, then $\Aut(A^G)$ is finitely generated. 
\end{corollary}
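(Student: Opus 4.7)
The plan is to deduce this corollary directly from the preceding lemma, which asserts the rank equality
\[
\Rank(\End(A^G)) = \Rank(\Aut(A^G)) + \Rank(\End(A^G) : \Aut(A^G))
\]
whenever $G$ is surjunctive and $A$ is finite. Since the relative rank $\Rank(\End(A^G) : \Aut(A^G))$ is a cardinal, it is in particular non-negative, and so dropping it from the right-hand side can only decrease the sum (in the usual cardinal arithmetic sense, where $\kappa + \lambda \geq \kappa$ for any cardinals $\kappa, \lambda$). This immediately yields the desired inequality $\Rank(\Aut(A^G)) \leq \Rank(\End(A^G))$.

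For the ``in particular'' clause, I would argue that if $\End(A^G)$ is finitely generated, then $\Rank(\End(A^G))$ is a finite cardinal, so by the inequality just established $\Rank(\Aut(A^G))$ is also finite, which is precisely the statement that $\Aut(A^G)$ is finitely generated.

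There is no real obstacle here: the work was already done in proving Lemma~\ref{rank-formula} and Theorem~\ref{th-surjunctive}, which together power the preceding lemma. The only small point worth flagging is that the argument tacitly uses the monotonicity of cardinal addition in the first coordinate, which handles both the finite and the infinite cases uniformly; no separate case analysis on whether the ranks are finite or infinite is required.
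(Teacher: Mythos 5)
Your proposal is correct and is exactly the argument the paper intends: the corollary is an immediate consequence of the preceding lemma's rank formula, obtained by discarding the non-negative relative-rank summand. The remark about monotonicity of cardinal addition handling the finite and infinite cases uniformly is a fair, if minor, point of care.
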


In order to continue our study on the ranks of $\End(A^G)$, the next lemma is a key ingredient.

\begin{lemma}[Proposition 1.6.2 in \cite{CSC10}]\label{le-epi}
Let $A$ be a set, $G$ a group, and $N$ a normal subgroup of $G$. There exists a monoid epimorphism $\Phi : \End(A^G) \to \End(A^{G/N})$.
\end{lemma}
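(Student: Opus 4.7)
The plan is to construct $\Phi$ by restricting cellular automata to $N$-periodic configurations. First, I would identify $A^{G/N}$ with the subset $\Fix(N) := \{x \in A^G : n \cdot x = x \ \forall n \in N\}$ via the bijection $\iota : A^{G/N} \to \Fix(N)$ defined by $\iota(y)(g) := y(gN)$; this is well-defined because $N$ is normal, and it intertwines the shift action of $G/N$ on $A^{G/N}$ with the restricted shift action of $G$ on $\Fix(N)$ (on which $N$ acts trivially). The key observation is that every $\tau \in \End(A^G)$ preserves $\Fix(N)$: if $x \in \Fix(N)$ and $n \in N$, then by $G$-equivariance $n \cdot \tau(x) = \tau(n \cdot x) = \tau(x)$. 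So I can set $\Phi(\tau) := \iota^{-1} \circ (\tau\vert_{\Fix(N)}) \circ \iota$.

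Next I would verify that $\Phi(\tau)$ is itself a cellular automaton. Given a memory set $S \subseteq G$ and local function $\mu : A^S \to A$ for $\tau$, let $\pi : G \to G/N$ be the projection and set $\bar S := \pi(S) \subseteq G/N$. On $N$-periodic configurations, two indices $s_1, s_2 \in S$ with $s_1 N = s_2 N$ contribute the same value, so $\mu$ descends to a well-defined $\bar \mu : A^{\bar S} \to A$. A direct calculation using $\iota(y)(gs) = y(gsN) = ((gN)^{-1} \cdot y)(sN)$ then yields $\Phi(\tau)(y)(gN) = \bar\mu(((gN)^{-1} \cdot y)\vert_{\bar S})$, so $\Phi(\tau) \in \End(A^{G/N})$ with memory set $\bar S$ and local function $\bar\mu$. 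The homomorphism property $\Phi(\tau_1 \tau_2) = \Phi(\tau_1)\Phi(\tau_2)$ is immediate from the fact that restriction to a set preserved by both $\tau_1$ and $\tau_2$ commutes with composition, and $\Phi(\id_{A^G}) = \id_{A^{G/N}}$ by inspection.

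The main obstacle is surjectivity, which I would address by lifting local functions. Given $\sigma \in \End(A^{G/N})$ with memory set $T \subseteq G/N$ and local function $\nu : A^T \to A$, I choose any set-theoretic section of $\pi$ and let $S \subseteq G$ be the image of $T$ under it, so that $\pi\vert_S : S \to T$ is a bijection. Transport $\nu$ across this bijection to obtain $\mu : A^S \to A$, and let $\tau$ be the cellular automaton with memory set $S$ and local function $\mu$. Verifying that $\Phi(\tau)(y)(gN) = \sigma(y)(gN)$ for arbitrary $y \in A^{G/N}$ and $g \in G$ is then a straightforward unwinding of the definitions, and gives $\Phi(\tau) = \sigma$, establishing that $\Phi$ is an epimorphism.
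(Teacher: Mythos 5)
The paper gives no proof of this lemma, only the citation to \cite[Proposition 1.6.2]{CSC10}, and your argument is a correct reconstruction of exactly that proof: identify $A^{G/N}$ with the $N$-fixed configurations, restrict, and lift local functions along a section of $G \to G/N$ for surjectivity. All the key checks (invariance of $\Fix(N)$ under any cellular automaton via $G$-equivariance, the induced local function on $\pi(S)$, and the unwinding that shows $\Phi(\tau)=\sigma$) go through as you describe.
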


\begin{corollary}\label{le:rankCA}
With the notation of Lemma \ref{le-epi},
\begin{enumerate}
\item $\Rank(\End(A^{G/N})) \leq \Rank( \End(A^G) )$.
\item  $\Rank(\End(A^{G/N}) : \Aut(A^{G/N}) ) \leq \Rank( \End(A^G) : \Aut(A^G))$.
\end{enumerate}
\end{corollary}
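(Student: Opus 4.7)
The plan is to deduce both inequalities immediately from the existence of the monoid epimorphism $\Phi : \End(A^G) \to \End(A^{G/N})$ provided by Lemma \ref{le-epi}, together with the general rank/relative-rank inequalities for epimorphisms of monoids established in Lemma \ref{le-epi-mon}. Essentially no new work is required; the corollary is the specialization of a routine fact to the cellular-automata setting.

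For part (1), I would simply invoke Lemma \ref{le-epi-mon}(1) applied to $\Phi$: any epimorphism pushes a generating set of the domain to a generating set of the codomain, so cardinalities (hence ranks) can only go down. This yields $\Rank(\End(A^{G/N})) \leq \Rank(\End(A^G))$ without further comment.

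For part (2), I would apply Lemma \ref{le-epi-mon}(2). The only point to check is the identification of the groups of units of $\End(A^G)$ and $\End(A^{G/N})$ with $\Aut(A^G)$ and $\Aut(A^{G/N})$, respectively; this is by definition (and is consistent with Lemma \ref{le-prop}(3) when $A$ is finite). Since $\Phi$ is a monoid homomorphism it automatically sends units to units, so $\Phi(\Aut(A^G)) \subseteq \Aut(A^{G/N})$, which is the only hypothesis used inside the proof of Lemma \ref{le-epi-mon}(2). The conclusion $\Rank(\End(A^{G/N}) : \Aut(A^{G/N})) \leq \Rank(\End(A^G) : \Aut(A^G))$ then follows directly.

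There is no genuine obstacle in this proof; the work was done in Lemmas \ref{le-epi-mon} and \ref{le-epi}. The statement is recorded as a corollary precisely because it is the form in which these inequalities will be repeatedly applied in the sequel (for instance, to transfer lower bounds on ranks from quotient groups $G/N$ with $N$ of finite index to the original group $G$).
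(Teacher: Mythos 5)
Your proof is correct and matches the paper's argument exactly: the corollary is obtained by applying both parts of Lemma \ref{le-epi-mon} to the epimorphism $\Phi$ of Lemma \ref{le-epi}. Your additional remark that $\Phi$ carries units to units is precisely the (implicit) point needed for part (2), so nothing is missing.
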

\begin{proof}
The result follows by Lemmas \ref{le-epi-mon} and \ref{le-epi}.
\end{proof}

Using Corollary \ref{le:rankCA} (1.), it was shown in \cite[Theorem 7]{C20} that if $G$ is a group with an infinite descending chain of normal subgroups of finite index, then $\End(A^G)$ is not finitely generated. We shall prove the analogous result of the previous theorem for the relative rank of $\Aut(A^G)$ in $\End(A^G)$. 

As we shall see, in order to prove this, we need a lower bound for the relative rank of $\Aut(A^G)$ in $\End(A^G)$ when $G$ is a finite group. In order to describe it, for any subgroup $H$ of $G$, let $[H] := \{ gHg^{-1} : g \in G\}$ be the conjugacy class of $H$. Given two conjugacy classes $[H]$ and $[K]$, write $[H] \leq [K]$ if there exists $g \in G$ such that $H \subseteq gKg^{-1}$; as $G$ is finite, this is a well-defined partial order on the conjugacy classes of subgroups of $G$. Consider the set of edges of the digraph associated to this partial order:
\[ \mathcal{E}_{G} := \{ ([H], [K]) : H,K \leq G, \ [H] \leq [K] \}. \] 
Let $n(G)$ be the number of normal subgroups of $G$ and $r(G)$ the number of conjugacy classes of subgroups of $G$. As normal subgroups of $G$ are in one-to-one correspondence with conjugacy classes of size $1$, we have $n(G) \leq r(G)$. Finally, let $I_2(G)$ be the set of subgroups of $G$ of index $2$. 

\begin{theorem}[Theorem 7 in \cite{CRG19}] \label{lower-bound}
For any finite set $A$ with at least two elements and any finite group $G$, 
\[ \Rank( \End(A^G) : \Aut(A^G)) \geq \begin{cases} 
\vert \mathcal{E}_G \vert - \vert I_2(G) \vert & \text{ if } \vert A \vert =2, \\
\vert \mathcal{E}_G \vert & \text{ otherwise,}
\end{cases} \]
with equality if and only if $G$ is a Dedekind group (i.e., all subgroups of $G$ are normal).
\end{theorem}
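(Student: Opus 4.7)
The plan is to attach to each cellular automaton $\tau \in \End(A^G)$ a subset $E(\tau) \subseteq \mathcal{E}_G$ encoding which transitions $\tau$ realizes non-invertibly, then show that each element of a relative generating set can contribute at most one edge. Since $G$ is finite, $A^G$ is a finite $G$-set under the shift, and by Curtis-Hedlund-Lyndon every $\tau \in \End(A^G)$ is $G$-equivariant, which forces $G_x \subseteq G_{\tau(x)}$ for every $x \in A^G$, where $G_x$ is the stabilizer. I would define
\[
E(\tau) := \{([H],[K]) \in \mathcal{E}_G : \exists\, x \in A^G, \ G_x = H, \ [G_{\tau(x)}] = [K]\},
\]
refined so that a diagonal $([H],[H])$ lies in $E(\tau)$ only when $\tau$ genuinely collapses within the stratum of type $[H]$ (either non-injectively on a single orbit, or by merging two distinct orbits of type $[H]$).

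The key properties to verify are: (a) for $\sigma \in \Aut(A^G)$, no collapse occurs and $E(\sigma) = \emptyset$, since a bijective $G$-equivariant map preserves every orbit type bijectively; (b) under composition, $E(\tau_1 \tau_2)$ is contained in $E(\tau_1) \cup E(\tau_2)$ together with order-compatible concatenations, and in particular automorphisms inserted between generators contribute no new edges; (c) every edge of $\mathcal{E}_G$ is realized by an explicit cellular automaton that collapses a representative type-$[H]$ orbit onto a type-$[K]$ orbit (one such construction for each edge). Assuming $\End(A^G) = \langle \Aut(A^G) \cup W \rangle$, properties (a)-(c) force each edge of $\mathcal{E}_G$ to lie in some $E(w)$ with $w \in W$, and a charging argument assigning each $w$ to at most one edge yields $|W| \geq |\mathcal{E}_G|$. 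The binary correction stems from the observation that for $H \in I_2(G)$ there is exactly one orbit of type $[H]$ in $\{0,1\}^G$, namely the pair of configurations constant on the two cosets of $H$; therefore no cellular automaton can merge two distinct type-$[H]$ orbits, so the diagonal edge $([H],[H])$ is never realized and the $|I_2(G)|$ diagonal edges drop out of the count.

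For the equality case when $G$ is Dedekind, every subgroup is normal, conjugacy classes are singletons, and the partial order reduces to containment of normal subgroups; one can construct explicit generators, one per edge of $\mathcal{E}_G$ (minus the $I_2(G)$ correction if $|A| = 2$), which together with $\Aut(A^G)$ generate $\End(A^G)$ and match the lower bound. For non-Dedekind $G$, strictness of the inequality follows from the fact that nontrivial conjugacy classes of subgroups introduce additional constraints preventing a single generator from realizing all transitions associated with one edge. The main technical obstacle is property (b) combined with the charging argument: automorphisms interleaved between generators may shuffle orbits within the same type stratum, and one must carefully establish that this shuffling cannot merge the contributions of distinct edges into a single $w$. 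This requires a delicate analysis of the composition of the relations $E(w_i)$ modulo the diagonal action of $\Aut(A^G)$ on each type stratum, and is the point where the rigidity of $G$-equivariance on finite $G$-sets is essential.
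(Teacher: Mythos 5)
First, note that the paper does not prove this statement at all: it is imported verbatim as Theorem~7 of \cite{CRG19}, so there is no in-paper proof to compare yours against; I can only assess your argument on its own terms. Your overall framework (stratify $A^G$ by stabilizer type, use that $G$-equivariance forces $G_x \subseteq G_{\tau(x)}$, build one witness per edge, and charge edges to generators) is the right one, but the charging step as you state it does not go through, for two concrete reasons. (i) $\mathcal{E}_G$ contains \emph{all} comparable pairs, not just covering relations, so if $[H] \lneq [L] \lneq [K]$ the edge $([H],[K])$ can a priori be realized purely as a concatenation of a generator realizing $([H],[L])$ with one realizing $([L],[K])$; your property (b) explicitly permits such concatenations, and therefore the conclusion ``each edge of $\mathcal{E}_G$ lies in some $E(w)$'' is not forced by (a)--(c). (ii) Even granting that, a single $w$ can lie in $E(w)$ for many edges simultaneously (e.g.\ it may collapse one orbit of type $[H]$ and independently merge two orbits of type $[H']$), so ``assigning each $w$ to at most one edge'' is not available and $|W| \geq |\mathcal{E}_G|$ does not follow. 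You flag a technical obstacle, but you locate it in the interaction with $\Aut(A^G)$ rather than in these two points, which are where the proof actually lives.

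Both gaps can be closed, and the repair shows what is missing. For each realizable edge $e=([H],[K])$ take the witness $\tau_e$ that maps a single orbit $O$ of type $[H]$ onto an orbit of type $[K]$ and is the identity elsewhere. The key rigidity lemma (absent from your proposal) is: if a composite $f_m \circ \cdots \circ f_1$ is injective on a stratum $X_{[L]}$, then every prefix $f_i \circ \cdots \circ f_1$ restricts to a \emph{bijection} of $X_{[L]}$, because an injective equivariant map cannot increase stabilizers (orbit sizes would drop) and an injective self-map of a finite set is onto. Applying this to $\tau_e$, which is injective on every stratum other than the orbit $O$ itself, one rules out any intermediate stop of the image of $O$ in a stratum $X_{[L]}$ with $[H]\lneq[L]\lneq[K]$ (such a stop would force $\tau_e$ to move some orbit of $X_{[L]}$, contradicting $\tau_e|_{X_{[L]}}=\mathrm{id}$), and one further deduces that the factor $f_j$ performing the jump is a bijection of every stratum $X_{[H']}$ with $[H']\neq[H]$ and is injective on $X_{[H]}\setminus O_{j-1}$; hence $f_j$ realizes the edge $e$ and \emph{no other} edge. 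These sets of ``pure'' realizers are pairwise disjoint, which is what yields $|W|\geq$ the number of realizable edges; your counting of the non-realizable diagonal edges via $I_2(G)$ when $|A|=2$ is correct. Finally, the equality characterization (sufficiency of $|\mathcal{E}_G|-|I_2(G)|$ generators for Dedekind $G$, and strictness otherwise) is asserted rather than proved; as written, ``nontrivial conjugacy classes introduce additional constraints'' is not an argument, and this half of the theorem would need genuine constructions and a separate lower-bound refinement.
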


Now we may prove the analogous result of \cite[Theorem 7]{C20} for the relative rank of $\Aut(A^G)$ in $\End(A^G)$.

\begin{theorem}\label{relative}
Let $A$ be a finite set with at least two elements, and let $G$ be a group such that there exists an infinite decreasing chain
\[ G > N_1 > N_2 > N_3 > \dots > N_k > \dots \]
where, for all $i \geq 1$, $N_i$ is a normal subgroup of $G$ of finite index. Then, the relative rank of $\Aut(A^G)$ in $\End(A^G)$ is infinite. 
\end{theorem}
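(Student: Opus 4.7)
The plan is to combine the monotonicity of relative rank under quotient epimorphisms (Corollary \ref{le:rankCA} (2.)) with the lower bound for finite groups (Theorem \ref{lower-bound}) to reduce to a combinatorial estimate on the quotients $G/N_i$. Since each $N_i$ is a normal subgroup of finite index, the quotient $G/N_i$ is a finite group, and
\[ \Rank(\End(A^G) : \Aut(A^G)) \geq \Rank(\End(A^{G/N_i}) : \Aut(A^{G/N_i})) \geq |\mathcal{E}_{G/N_i}| - |I_2(G/N_i)|, \]
where the last term $|I_2(G/N_i)|$ is dropped when $|A| \geq 3$. Hence it suffices to show that $|\mathcal{E}_{G/N_i}| - |I_2(G/N_i)|$ tends to infinity with $i$.

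To lower-bound $|\mathcal{E}_{G/N_i}|$, I would exhibit explicit families of edges. First, the chain $G/N_i \supsetneq N_1/N_i \supsetneq \dots \supsetneq N_{i-1}/N_i \supsetneq \{1\}$ gives $i+1$ distinct normal subgroups of $G/N_i$, each a singleton conjugacy class. Every ordered pair $(H,K)$ taken from this chain with $H \subseteq K$ (including the reflexive pairs $H = K$) contributes a distinct edge to $\mathcal{E}_{G/N_i}$, producing $\binom{i+2}{2}$ edges in total. This already settles the case $|A| \geq 3$.

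For $|A| = 2$, the key observation is that the indices $[G : N_j]$ are strictly increasing in $j$, so at most one subgroup $N_{j_*}/N_i$ of the chain can have index $2$ in $G/N_i$. Consequently, for every $H \in I_2(G/N_i)$ with $H \neq N_{j_*}/N_i$ (of which there are at least $|I_2(G/N_i)| - 1$), the edge $(\{1\}, H)$ lies in $\mathcal{E}_{G/N_i}$ and is distinct from all chain edges, because no chain subgroup besides possibly $N_{j_*}/N_i$ has index $2$. Adding these contributions yields
\[ |\mathcal{E}_{G/N_i}| \geq \binom{i+2}{2} + |I_2(G/N_i)| - 1, \]
so $|\mathcal{E}_{G/N_i}| - |I_2(G/N_i)| \geq \binom{i+2}{2} - 1 \to \infty$, which proves the theorem.

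I don't expect a serious obstacle. The only delicate point is the $|A|=2$ case, where $|I_2(G/N_i)|$ could a priori grow quickly (e.g. when $G/N_i$ has large elementary abelian $2$-quotient); the trick of pairing each extra index-$2$ subgroup with an additional edge $(\{1\}, H)$ out of the chain is what offsets this term cleanly.
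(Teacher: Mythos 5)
Your proposal is correct and follows the same route as the paper: reduce to the finite quotients $G/N_i$ via Corollary \ref{le:rankCA} (2.) and apply the lower bound of Theorem \ref{lower-bound}. The only difference lies in the final combinatorial estimate: the paper bounds $\vert \mathcal{E}_{G/N_i} \vert \geq r(G/N_i) \geq n(G/N_i) \geq i+1$ and absorbs the $\vert I_2(G/N_i)\vert$ term by noting that index-$2$ subgroups are normal and at most one chain subgroup has index $2$, whereas you exhibit $\binom{i+2}{2}$ chain edges explicitly and offset each non-chain index-$2$ subgroup with an extra edge $([\{1\}],[H])$ --- a slightly sharper and more explicit version of the same estimate.
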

\begin{proof}
As $G/N_i$ is a finite group, by Theorem \ref{lower-bound} we have
\[ \Rank(\End(A^{G/N_i}) : \Aut(A^{G/N_i}) )\geq  \vert \mathcal{E}_{G/N_i} \vert - \vert I_2(G/N_i) \vert. \]
Observe that
\[ \vert \mathcal{E}_{G/N_i} \vert \geq r(G/N_i) \geq n(G/N_i). \] 
By the Correspondence Theorem, the normal subgroups of $G/N_i$ are in one-to-one correspondence with normal subgroups $N$ of $G$ such that $N_i \leq N \leq G$. Thus, $n(G/N_i)$ is at least $i+1$, as $N_i < N_{i-1} < \dots < N_1 < G$ are intermediate normal subgroups. As at most one of these intermediate subgroups has index $2$, we have
\[  \Rank(\End(A^{G/N_i}) : \Aut(A^{G/N_i}) ) \geq n(G/N_i) -  \vert I_2(G/N_i) \vert \geq (i+1) - 1 =  i, \quad \text{ for all } i \geq 1. \]
By Corollary \ref{le:rankCA}, for all $i \geq 1$, we have
\[  \Rank( \End(A^G) : \Aut(A^G)) \geq  \Rank(\End(A^{G/N_i}) : \Aut(A^{G/N_i}) )  \geq i.   \]
This shows that $\Rank( \End(A^G) : \Aut(A^G))$ cannot be finite. 
\end{proof}

\begin{remark}
The hypothesis of Theorem \ref{relative} may seem unnatural, but it is in fact satisfied by many infinite groups, such as every infinite residually finite group. First of all, it is easy to check that a group $G$ is residually finite if and only if for every non-identity $g \in G$ there is a normal subgroup $N$ of finite index in $G$ such that $g \not \in N$. Hence, when $G$ is infinite residually finite, we shall construct an infinite descending chain $G = N_0 > N_1 > N_2 > \dots$ of normal subgroups of finite index. Suppose that the group $N_i$ has been constructed. Clearly, $N_i$ is non-trivial as it is a subgroup of finite index in an infinite group. Take $g \in N_i < G$ with $g \neq e$. Then, there exists a normal subgroup $N_i^\prime$ of finite index in $G$ such that $g \not \in N_i^\prime$. Define $N_{i+1} := N_i \cap N_i^\prime$. Note that $N_{i+1}$ is properly contained in $N_i$, as, otherwise, $N_i = N_i \cap N_i^\prime$ implies that $N_i \subseteq N_i^\prime$, contradicting the existence of $g$. Since intersections of normal subgroups are normal subgroups, and intersections of subgroups of finite index have finite index, then $N_{i+1}$ has the required properties.  
\end{remark}

%%%%%%%%%%%%%%%%%%%%%%%%%%%%%%%%%%%%%%%%%%%%%%%%%%%%
\section{Generating monoids of linear cellular automata} \label{LCA}

Let $V$ a vector space over a field $\mathbb{F}$. For any group $G$, the configuration space $V^G$ is also a vector space over $\mathbb{F}$ equipped with the pointwise addition and pointwise scalar multiplication. Let $\End_{\mathbb{F}}(V^G)$ be the set of all cellular automata of $V^G$ that are also $\mathbb{F}$-linear endomorphisms of $V^G$. Note that $\End_{\mathbb{F}}(V^G)$ is not only a monoid with respect to composition, but also an $\mathbb{F}$-algebra (i.e. a vector space over $\mathbb{F}$ equipped with a bilinear binary product), because, again, we may equip $\End_{\mathbb{F}}(V^G)$ with the pointwise addition and pointwise scalar multiplication. In particular, $\End_{\mathbb{F}}(V^G)$ is also a ring. However, in this section, we shall focus on the monoid structure of $\End_{\mathbb{F}}(V^G)$ with respect to composition; hence, the rank of $\End_{\mathbb{F}}(V^G)$ is the smallest cardinality of a subset $T$ that generates $\End_{\mathbb{F}}(V^G)$ using only composition. 

\begin{remark}
Given a cellular automaton $\tau : V^G \to V^G$, it is straightforward to show that $\tau$ is linear if and only if its local function $\mu : V^S \to V$ is linear. If $V$ is not finite-dimensional, then there are uncountably many linear functions $\mu : V^S \to V$. It follows that $\End_{\mathbb{F}}(V^G)$ is uncountable, so its rank is $\vert \End_{\mathbb{F}}(V^G) \vert$.
\end{remark}

To address the case when $V$ is a finite-dimensional vector space, we shall introduce an equivalent way to describe the monoid $\End_{\mathbb{F}}(V^G)$. 

For any ring $R$, the \emph{group ring} $R[G]$ consists of the set of all functions $f : G \to R$ of finite support equipped with pointwise addition and scalar multiplication. Equivalently, we may see the elements of $R[G]$ as formal finite sums $\sum_{g \in G} a_g g$ with $a_g \in R$; then, multiplication in $R[G]$ is defined naturally using the multiplications of $G$ and $R$:
\[ \sum_{g \in G} a_g g \sum_{h \in G} b_h h = \sum_{g, h \in G} a_g b_h  gh. \]
By \cite[Theorem 8.5.2]{CSC10}), for any group $G$ and any vector space $V$ over $\mathbb{F}$, we have the following isomorphism of $\mathbb{F}$-algebras: 
\[ \End_{\mathbb{F}}(V^G) \cong \End(V)[G]. \]
Denote by $\Aut_{\mathbb{F}}(V^G)$ the group of units of $\End_{\mathbb{F}}(V^G)$. 

\begin{theorem}\label{infinite-field}
Let $G$ be any group, and let $V$ be a finite-dimensional vector space over an infinite field $\mathbb{F}$. Then, neither $\End_{\mathbb{F}}(V^G)$ nor $\Aut_{\mathbb{F}}(V^G)$ are finitely generated.
\end{theorem}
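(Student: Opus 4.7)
The plan is to exploit the epimorphism of Lemma \ref{le-epi} with $N = G$ to reduce the theorem to a statement about $\mathbb{F}^*$. Since the local function of a linear cellular automaton is linear and the construction of $\Phi$ in Lemma \ref{le-epi} is carried out at the level of local functions, the epimorphism $\Phi : \End(V^G) \twoheadrightarrow \End(V)$ restricts to a monoid epimorphism $\End_{\mathbb{F}}(V^G) \twoheadrightarrow \End(V) \cong M_n(\mathbb{F})$, where $n = \dim_{\mathbb{F}} V$. Under the identification $\End_{\mathbb{F}}(V^G) \cong \End(V)[G]$, this is the augmentation $\sum_g a_g g \mapsto \sum_g a_g$, which admits the section $a \mapsto a \cdot e$. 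Since that section sends $GL(V)$ into the units of $\End_{\mathbb{F}}(V^G)$, the restriction of $\Phi$ gives a group epimorphism $\Aut_{\mathbb{F}}(V^G) \twoheadrightarrow GL(V) \cong GL_n(\mathbb{F})$.

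Both parts of the theorem now reduce to the statement that $GL_n(\mathbb{F})$ is not finitely generated. For $\Aut_{\mathbb{F}}(V^G)$ this is immediate, because the image of a finitely generated group under a group epimorphism is finitely generated. For $\End_{\mathbb{F}}(V^G)$, Lemma \ref{le-epi-mon}(1) yields $\Rank(M_n(\mathbb{F})) \leq \Rank(\End_{\mathbb{F}}(V^G))$, and since $M_n(\mathbb{F})$ is directly finite (being a finite-dimensional $\mathbb{F}$-algebra, where injective endomorphisms are bijective), Lemma \ref{rank-formula} gives $\Rank(M_n(\mathbb{F})) \geq \Rank(GL_n(\mathbb{F}))$. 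Finally, the determinant $\det : GL_n(\mathbb{F}) \twoheadrightarrow \mathbb{F}^*$ reduces the problem to showing that the multiplicative group of an infinite field is not finitely generated.

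The key step, and the main obstacle, is this last classical fact about infinite fields, which I would prove by splitting on the characteristic. In characteristic $0$, the subgroup $\mathbb{Q}^* \leq \mathbb{F}^*$ is not finitely generated, since unique factorization of integers gives $\mathbb{Q}^* \cong \mathbb{Z}/2 \oplus \bigoplus_{p \text{ prime}} \mathbb{Z}$; as $\mathbb{F}^*$ is abelian, if it were finitely generated then every subgroup of it would be, a contradiction. In characteristic $p > 0$, let $K$ denote the algebraic closure of $\mathbb{F}_p$ inside $\mathbb{F}$. If $K$ is infinite, then $K^*$ is a torsion group (each element lies in some $\mathbb{F}_{p^n}^*$) with elements of arbitrarily large order, hence not finitely generated, because a finitely generated abelian torsion group is finite. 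If $K$ is finite, then $\mathbb{F}$ is not algebraic over $\mathbb{F}_p$, so it contains some element $t$ transcendental over $K$, and the valuations at the infinitely many monic irreducible polynomials in $K[t]$ give a surjection $K(t)^* \twoheadrightarrow \bigoplus_{f} \mathbb{Z}$; hence $K(t)^* \leq \mathbb{F}^*$ is not finitely generated. In every case $\mathbb{F}^*$ fails to be finitely generated, completing the proof.
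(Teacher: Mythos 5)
Your proposal is correct and follows essentially the same route as the paper: the augmentation epimorphism $\End(V)[G]\to\End(V)$ followed by the determinant reduces everything to the classical fact that $\mathbb{F}^{\star}$ is not finitely generated for an infinite field $\mathbb{F}$ (which the paper simply cites from de la Harpe, while you prove it by splitting on the characteristic). Your minor detour through direct finiteness of $M_n(\mathbb{F})$ and Lemma~\ref{rank-formula} for the $\End$ case, where the paper applies the determinant to the full multiplicative monoid $\End(V)$ directly, is an inessential variation.
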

\begin{proof}
Consider the agumentation homomorphism $\epsilon : \End(V)[G] \to \End(V)$ defined by 
\[ \epsilon \left( \sum_{g \in G} a_g g \right) := \sum_{g \in G} a_g, \quad \text{ for every }   \sum_{g \in G} a_g g \in \End(V)[G], \] 
see \cite[Definition 3.2.9]{MS02}. As this is an epimorphism, Lemma \ref{le-epi-mon} implies that $\Rank \left( \End(V)[G] \right)$ is at least $\Rank(\End(V))$. But now we have an epimorphism from $\End(V)$ to $\mathbb{F}$ given by the determinant map. Hence, $\Rank(\mathbb{F}) \leq \Rank(\End(V))$. As the multiplicative structure of an infinite field is not finitely generated (see \cite[p. 46]{H2000}), we conclude that $\End(V)[G] \cong \End_{\mathbb{F}}(V^G)$ is not finitely generated.

To show that $\Aut_{\mathbb{F}}(V^G)$ is not finitely generated, let $U$ be the group of units of $\End(V)[G]$. Consider the restriction of the agumentation homomorphism $\epsilon\vert_U : U \to \Aut(V)$. This is an epimorphism, as a preimage of any $a \in \Aut(V)$ is $ag \in U$, for any $g \in G$. Hence, $\Rank(\Aut(V)) \leq \Rank(U)$. Now again we have an epimorphsim from $\Aut(V)$ to $\mathbb{F}^\star$ given by the determinant map, so $\Rank(\mathbb{F}^\star) \leq \Rank(\Aut(V))$. As $\mathbb{F}^\star$ is not finitely generated, then $U \cong \Aut_{\mathbb{F}}(V^G)$ is not finitely generated. 
\end{proof}

We shall develop some machinery in order to address the case when the field $\mathbb{F}$ is finite. 

A group $G$ is called \emph{L-surjunctive} if for any finite-dimensional vector space $V$, every injective linear cellular automaton $\tau : V^G \to V^G$ is surjective. It is known that all sofic groups are L-surjunctive (see \cite[Theorem 1.2]{CSC07}), so in particular all resudually finite groups and all amenable groups are L-surjunctive. An analogous result to Theorem \ref{th-surjunctive} establishes that $G$ is L-surjunctive if and only if $\End_{\mathbb{F}}(V^G)$ is directly finite (see \cite[Theorem 1.3]{CSC07}). Hence, we have the following lemma.

\begin{lemma}\label{formula-LCA}
Let $G$ be an L-surjunctive group, and let $V$ be a finite-dimensional vector space. Then,
\[ \Rank(\End_{\mathbb{F}}(V^G)) = \Rank(\Aut_{\mathbb{F}}(V^G)) + \Rank(\End_{\mathbb{F}}(V^G) : \Aut_{\mathbb{F}}(V^G)). \]
\end{lemma}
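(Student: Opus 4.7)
The plan is to reduce this statement directly to the general rank formula established earlier in the paper, namely Lemma \ref{rank-formula}, which asserts that for any directly finite monoid $M$ with group of units $U$, one has $\Rank(M) = \Rank(U) + \Rank(M:U)$. Since $\Aut_{\mathbb{F}}(V^G)$ is by definition the group of units of $\End_{\mathbb{F}}(V^G)$, the only thing missing to apply Lemma \ref{rank-formula} with $M = \End_{\mathbb{F}}(V^G)$ and $U = \Aut_{\mathbb{F}}(V^G)$ is the verification that $\End_{\mathbb{F}}(V^G)$ is directly finite.

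That verification is precisely the content of the L-surjunctivity criterion quoted in the paragraph preceding the lemma: by \cite[Theorem 1.3]{CSC07}, a group $G$ is L-surjunctive if and only if, for every finite-dimensional vector space $V$ over $\mathbb{F}$, the monoid $\End_{\mathbb{F}}(V^G)$ is directly finite. Hence our hypothesis that $G$ be L-surjunctive, together with the finite-dimensionality of $V$, delivers direct finiteness of $\End_{\mathbb{F}}(V^G)$ for free.

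With direct finiteness in hand, Lemma \ref{rank-formula} applies verbatim and yields the claimed equality. There is essentially no obstacle to overcome: the proof is a two-line invocation of the L-surjunctive characterisation and the general rank formula for directly finite monoids, in perfect parallel to the non-linear case already treated in the lemma preceding Corollary 2 (which deduced the analogous identity for $\End(A^G)$ from Theorem \ref{th-surjunctive} and Lemma \ref{rank-formula}). If anything merits a brief comment, it is only the remark that $\Aut_{\mathbb{F}}(V^G)$ coincides with the group of units of $\End_{\mathbb{F}}(V^G)$ by the definition given just above the lemma, so the notation in Lemma \ref{rank-formula} matches the notation in the statement.
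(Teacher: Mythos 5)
Your proposal is correct and follows exactly the paper's argument: the paper derives this lemma from the characterisation of L-surjunctivity as direct finiteness of $\End_{\mathbb{F}}(V^G)$ (via \cite[Theorem 1.3]{CSC07}) combined with Lemma \ref{rank-formula}. Nothing is missing.
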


\begin{corollary}\label{Rank-Lin}
Let $G$ be an L-surjunctive group and $V$ a finite-dimensional vector space. Then,
\[ \Rank(\Aut_{\mathbb{F}}(V^G)) \leq \Rank(\End_{\mathbb{F}}(V^G)). \]
\end{corollary}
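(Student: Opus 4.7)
The statement is an immediate consequence of Lemma \ref{formula-LCA}, so the plan is essentially a one-line argument: apply that lemma and drop the non-negative relative-rank summand. Concretely, I would invoke Lemma \ref{formula-LCA} to write
\[ \Rank(\End_{\mathbb{F}}(V^G)) = \Rank(\Aut_{\mathbb{F}}(V^G)) + \Rank(\End_{\mathbb{F}}(V^G) : \Aut_{\mathbb{F}}(V^G)), \]
and then observe that $\Rank(\End_{\mathbb{F}}(V^G) : \Aut_{\mathbb{F}}(V^G)) \geq 0$ as a cardinal number, so the right-hand side is at least $\Rank(\Aut_{\mathbb{F}}(V^G))$.

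The only subtlety worth mentioning is that the sum is a sum of cardinals (possibly infinite), but the monotonicity $\alpha \leq \alpha + \beta$ for cardinals $\alpha,\beta$ holds regardless of whether either term is finite or infinite, so no extra case analysis is needed. The hypothesis that $G$ is L-surjunctive and $V$ is finite-dimensional is used exactly to invoke Lemma \ref{formula-LCA}, which in turn relies on the fact that under these assumptions $\End_{\mathbb{F}}(V^G)$ is directly finite and hence Lemma \ref{rank-formula} applies.

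Since there is no real obstacle, the only presentation choice is whether to also remark on the finite-generation consequence: namely, that if $\End_{\mathbb{F}}(V^G)$ is finitely generated, then so is $\Aut_{\mathbb{F}}(V^G)$, exactly mirroring the corollary following Theorem \ref{th-surjunctive} in the non-linear case. I would include this as a one-sentence remark at the end for completeness, but no additional argument beyond the inequality is required.
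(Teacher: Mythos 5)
Your argument is correct and matches the paper's intended reasoning exactly: the corollary follows immediately from Lemma \ref{formula-LCA} by dropping the non-negative relative-rank summand, just as the analogous corollary in the non-linear case follows from the rank formula there. No gaps.
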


The next lemma is the linear analogue of Lemma \ref{le-epi}.

\begin{lemma}\label{le-epi-lin}
Let $V$ be a vector space, $G$ a group, and $N$ a normal subgroup of $G$. There exists an epimorphism $\Phi : \End_{\mathbb{F}}(V^G) \to \End_{\mathbb{F}}(V^{G/N})$. 
\end{lemma}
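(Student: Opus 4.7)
The plan is to reuse the construction of the monoid epimorphism from Lemma \ref{le-epi} and verify that it restricts to an epimorphism between the submonoids of linear cellular automata. Let $\pi : G \to G/N$ denote the quotient map, and consider the $\mathbb{F}$-linear embedding $\pi^* : V^{G/N} \to V^G$ defined by $\pi^*(y)(g) := y(gN)$. Its image $W := \pi^*(V^{G/N})$ is the set of $N$-periodic configurations, namely those $x \in V^G$ with $n \cdot x = x$ for every $n \in N$. Since every cellular automaton $\tau$ is $G$-equivariant, $\tau$ preserves $W$, so the map of Lemma \ref{le-epi} is given concretely by $\Phi(\tau) := (\pi^*)^{-1} \circ \tau\vert_W \circ \pi^*$.

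First I would verify that $\Phi$ restricts to a map $\End_{\mathbb{F}}(V^G) \to \End_{\mathbb{F}}(V^{G/N})$. This is immediate once one observes that both $\pi^*$ and $(\pi^*)^{-1}$ are $\mathbb{F}$-linear, as addition and scalar multiplication on configurations are pointwise and $\pi^*$ is nothing but precomposition with $\pi$. Hence $\Phi(\tau)$ is a composition of three $\mathbb{F}$-linear maps whenever $\tau$ is linear. Since Lemma \ref{le-epi} already ensures that $\Phi$ is a monoid homomorphism on the full endomorphism monoid, the restriction is automatically a monoid homomorphism.

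For surjectivity, given $\sigma \in \End_{\mathbb{F}}(V^{G/N})$ with finite memory set $S' \subseteq G/N$ and linear local function $\mu' : V^{S'} \to V$, I would pick a finite $S \subseteq G$ such that $\pi\vert_S : S \to S'$ is a bijection, and define the local function $\mu : V^S \to V$ by $\mu(c) := \mu'\bigl( c \circ (\pi\vert_S)^{-1} \bigr)$. This $\mu$ is linear as a composition of linear maps, so the cellular automaton $\tau \in \End(V^G)$ it determines lies in $\End_{\mathbb{F}}(V^G)$. A direct evaluation on $N$-periodic configurations then gives $\Phi(\tau) = \sigma$.

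The main potential obstacle is nothing more than bookkeeping with the identifications and with how memory sets transform under $\pi$, since the topological/set-theoretic content is already packaged in Lemma \ref{le-epi}. As an alternative route, one could invoke the isomorphism $\End_{\mathbb{F}}(V^G) \cong \End(V)[G]$ and its analogue for $G/N$, and observe that the quotient $G \to G/N$ induces the natural surjective ring homomorphism $\End(V)[G] \to \End(V)[G/N]$ sending $\sum_g a_g g \mapsto \sum_g a_g (gN)$, which is a fortiori a monoid epimorphism under multiplication.
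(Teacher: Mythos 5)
Your proposal is correct, but your main argument takes a genuinely different route from the paper. The paper does not revisit the configuration-space construction at all: it invokes the $\mathbb{F}$-algebra isomorphism $\End_{\mathbb{F}}(V^G) \cong \End(V)[G]$, quotients by the augmentation ideal $\Delta(G,N)$, and cites \cite[Corollary 3.3.5]{MS02} for $\End(V)[G]/\Delta(G,N) \cong \End(V)[G/N]$ --- so the ``alternative route'' you sketch in your last sentence is essentially the paper's actual proof. Your primary argument instead extends Lemma \ref{le-epi} directly: you identify $V^{G/N}$ with the $N$-fixed configurations via $\pi^*$, note that $G$-equivariance forces every cellular automaton to preserve that subspace, observe that $\Phi(\tau)=(\pi^*)^{-1}\circ\tau\vert_W\circ\pi^*$ is linear when $\tau$ is (since $\pi^*$ is precomposition with $\pi$, hence linear), and witness surjectivity by lifting a linear local rule $\mu'$ along a transversal of $\pi$, which manifestly produces a linear $\mu$. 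Both arguments are sound. Yours is more self-contained and elementary, makes the epimorphism concrete at the level of configurations, and only needs the already-established Lemma \ref{le-epi}; the paper's is shorter because it outsources everything to standard group-ring facts, and it yields slightly more, namely that $\Phi$ is an epimorphism of $\mathbb{F}$-algebras (in particular of rings), not merely of multiplicative monoids --- though only the monoid structure is used later. The one piece of bookkeeping you correctly flag, the compatibility of memory sets with $\pi$, is exactly the content already packaged in \cite[Proposition 1.6.2]{CSC10}, so nothing is missing.
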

\begin{proof}
Consider the agumentation ideal $\Delta(G,N)$ of $\End(V)[G]$; this is, the left ideal of $\End(V)[G]$ generated by the set $\{ a - \id  : a \in N  \}$. By \cite[Corollary 3.3.5]{MS02}, $\Delta(G,N)$ is a two-sided ideal and 
\[ \frac{\End(V)[G]}{\Delta(G,N)} \cong \End(V)[G/N].\]
The result follows, as $\End(V)[G] \cong \End_{\mathbb{F}}(V^G)$ and $\End(V)[G/N] \cong \End_{\mathbb{F}}(V^{G/N})$. 
\end{proof}

\begin{corollary}\label{Cor-Lin}
Let $V$ be a vector space, $G$ a group, and $N$ a normal subgroup of $G$. Then,
\[ \Rank(\End_{\mathbb{F}}(V^{G/N})) \leq \Rank( \End_{\mathbb{F}}(V^G)). \]
\end{corollary}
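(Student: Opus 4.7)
The statement is an immediate consequence of the two results the author just put in place, so the proof plan is essentially a one-line invocation. The plan is to apply Lemma \ref{le-epi-lin} to obtain a monoid epimorphism $\Phi : \End_{\mathbb{F}}(V^G) \to \End_{\mathbb{F}}(V^{G/N})$, and then appeal to part (1) of Lemma \ref{le-epi-mon}, which says that monoid epimorphisms cannot increase rank, to conclude that $\Rank(\End_{\mathbb{F}}(V^{G/N})) \leq \Rank(\End_{\mathbb{F}}(V^G))$.

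If one wanted to unpack this, the underlying argument is entirely standard: if $T \subseteq \End_{\mathbb{F}}(V^G)$ is a generating set of minimum cardinality, then $\Phi(T)$ is a generating set of $\End_{\mathbb{F}}(V^{G/N})$ because $\Phi$ is surjective and multiplicative, so $\Rank(\End_{\mathbb{F}}(V^{G/N})) \leq |\Phi(T)| \leq |T| = \Rank(\End_{\mathbb{F}}(V^G))$. But this is exactly what Lemma \ref{le-epi-mon} (1) packages, so the proof need only cite the two lemmas.

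There is no genuine obstacle here; the only thing that had to be done nontrivially was the construction of the epimorphism itself, which was handled in Lemma \ref{le-epi-lin} via the augmentation ideal $\Delta(G,N)$ and the isomorphism $\End(V)[G]/\Delta(G,N) \cong \End(V)[G/N]$. Once that is granted, the corollary is purely formal.
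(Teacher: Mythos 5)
Your proposal is correct and matches the paper exactly: the paper's proof of this corollary is the single line ``The result follows by Lemmas \ref{le-epi-mon} and \ref{le-epi-lin}.'' Your unpacking of why an epimorphism cannot increase rank is accurate and consistent with the proof of Lemma \ref{le-epi-mon}(1) given earlier in the paper.
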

\begin{proof}
The result follows by Lemmas \ref{le-epi-mon} and \ref{le-epi-lin}. 
\end{proof}

Recall that $G$ is an \emph{indicable group} if there exists an epimorphism from $G$ to $\mathbb{Z}$ (see \cite[Definition 3]{B93}).

\begin{theorem}\label{LCA}
Let $G$ be an indicable group, and let $V$ be a finite-dimensional vector space over a finite field $\mathbb{F}$. Then, the monoid $ \End_{\mathbb{F}}(V^{G})$ is not finitely generated.
\end{theorem}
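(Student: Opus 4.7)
The plan is to use indicability to push the problem down to the case $G=\mathbb{Z}$, identify $\End_{\mathbb{F}}(V^{\mathbb{Z}})$ with a matrix ring over the Laurent polynomial ring $\mathbb{F}[t,t^{-1}]$, and then exploit the fact that a UFD with infinitely many irreducibles cannot have a finitely generated multiplicative monoid.

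First, since $G$ is indicable, there is an epimorphism $G \to \mathbb{Z}$ whose kernel $N$ is a normal subgroup with $G/N \cong \mathbb{Z}$. By Corollary \ref{Cor-Lin}, $\Rank(\End_{\mathbb{F}}(V^{\mathbb{Z}})) \leq \Rank(\End_{\mathbb{F}}(V^G))$, so it is enough to show that $\End_{\mathbb{F}}(V^{\mathbb{Z}})$ is not finitely generated. Setting $n := \dim_{\mathbb{F}} V$, the paper's isomorphism $\End_{\mathbb{F}}(V^{\mathbb{Z}}) \cong \End(V)[\mathbb{Z}]$, combined with $\End(V) \cong M_n(\mathbb{F})$ and the standard ring identification $M_n(R)[H] \cong M_n(R[H])$ applied to $R = \mathbb{F}$, $H = \mathbb{Z}$, yields
\[ \End_{\mathbb{F}}(V^{\mathbb{Z}}) \;\cong\; M_n(\mathbb{F})[\mathbb{Z}] \;\cong\; M_n(\mathbb{F}[t,t^{-1}]). \]

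Second, since $\mathbb{F}[t,t^{-1}]$ is commutative, the determinant $\det \colon M_n(\mathbb{F}[t,t^{-1}]) \to \mathbb{F}[t,t^{-1}]$ is a multiplicative monoid homomorphism, and it is surjective because $\det(\mathrm{diag}(f,1,\dots,1)) = f$ for every $f \in \mathbb{F}[t,t^{-1}]$. Applying Lemma \ref{le-epi-mon}(1.) to $\det$, it remains to prove that the multiplicative monoid of $\mathbb{F}[t,t^{-1}]$ is not finitely generated. Suppose for contradiction that $\mathbb{F}[t,t^{-1}] = \langle f_1,\dots,f_k\rangle$ as a multiplicative monoid. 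Since $\mathbb{F}[t,t^{-1}]$ is a UFD (its units being $\mathbb{F}^\star \cdot t^{\mathbb{Z}}$), any irreducible $p$ written as $p = f_{i_1}\cdots f_{i_m}$ must divide some $f_{i_j}$, and hence must appear, up to associates, among the finitely many irreducible factors of the generators $f_1,\dots,f_k$. But $\mathbb{F}[t,t^{-1}]$ has infinitely many pairwise non-associated irreducibles, since $\mathbb{F}[t]$ has infinitely many monic irreducible polynomials (by the usual Euclid-style argument, valid over any field), all but $t$ of which remain irreducible in $\mathbb{F}[t,t^{-1}]$. This contradiction finishes the proof.

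The main obstacle is organising the chain of identifications so that the monoid-theoretic statement one actually reduces to concerns a ring for which a clean arithmetic argument is available; once one commits to passing through $M_n(\mathbb{F}[t,t^{-1}])$ and the determinant, the remaining irreducible-counting argument is classical. Note that the finite-field hypothesis is not really used in this plan, which is consistent with the role of Theorem \ref{LCA} as completing the case left open by Theorem \ref{infinite-field}.
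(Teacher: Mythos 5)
Your proof is correct, but the crux of your argument differs from the paper's. Both proofs share the same reduction scaffolding (indicability, the isomorphism $\End_{\mathbb F}(V^G)\cong\End(V)[G]$, passage to $\mathrm{Mat}_{d\times d}(\mathbb F[\mathbb Z])$ and the determinant epimorphism onto $\mathbb F[\mathbb Z]$), but they diverge on why the multiplicative monoid of $\mathbb F[\mathbb Z]\cong\mathbb F[t,t^{-1}]$ is not finitely generated. The paper never works with $\mathbb F[t,t^{-1}]$ directly: it pushes down to \emph{finite} cyclic quotients $G/N_k\cong\mathbb Z_{2^k}$ (or $\mathbb Z_{3^k}$ in characteristic $2$), invokes the Perlis--Walker theorem to split $\mathbb F[\mathbb Z_{2^k}]$ into at least $k+1$ field summands, and then computes that the unit group, a direct sum of cyclic groups of pairwise non-coprime orders, has rank at least $k+1$; this gives an unbounded lower bound on $\Rank(\End_{\mathbb F}(\mathbb F^G))$ via Corollaries \ref{Rank-Lin} and \ref{Cor-Lin}. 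You instead go straight to the quotient $\mathbb Z$ and run a classical factorization argument: a finitely generated multiplicative monoid that is a UFD can contain only the finitely many irreducibles (up to associates) dividing its generators, while $\mathbb F[t,t^{-1}]$ has infinitely many pairwise non-associate irreducibles. Your route is shorter, avoids the characteristic case split, and, as you observe, does not use finiteness of $\mathbb F$, so it also recovers the finite-dimensional part of Theorem \ref{infinite-field}. What it does not give you is any information about the \emph{groups of units}: your witnesses are non-units, so the argument is silent on $\Aut$, whereas the paper's Perlis--Walker computation produces as a by-product the quantitative lower bounds $\Rank(\Aut_{\mathbb F}(\mathbb F^{\mathbb Z_{2^k}}))\geq k+1$, which fit the paper's broader theme of comparing $\Rank(\Aut)$ with $\Rank(\End)$. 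One small point to make explicit if you write this up: distinct monic irreducibles of $\mathbb F[t]$ with nonzero constant term remain non-associate in $\mathbb F[t,t^{-1}]$ because the units there are exactly $\mathbb F^\star t^{\mathbb Z}$; you assert this correctly but it deserves a line of justification.
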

\begin{proof}
First we prove the theorem for $V = \mathbb{F}$. Suppose first that $p =: \text{char}(\mathbb{F})$ is not $2$. As $G$ is indicable, we may find a normal subgroup $N_k$ such that $G/N_k \cong \mathbb{Z}_{2^k}$, for every $k \in \mathbb{N}$. As $p \nmid 2^k$, it is a consequence of Perlis-Walker theorem (see \cite[Theorem 3.5.4]{MS02}) that $ \End_{\mathbb{F}}(\mathbb{F}^{\mathbb{Z}_{2^k}}) \cong  \mathbb{F}[\mathbb{Z}_{2^k}]$ decomposes as a direct sum $K_1 \oplus K_2 \oplus \dots \oplus K_{t}$, where each $K_i$ is a simple field extension of $\mathbb{F}$ by a primitive root of unity, and where $t$ is at least $k+1$, which is the number of divisors of $\vert \mathbb{Z}_{2^k} \vert = 2^k$. Therefore,
\[ \Aut_{\mathbb{F}}(\mathbb{F}^{\mathbb{Z}_{2^k}})\cong K_1^\star \oplus K_2^\star \oplus \dots \oplus K_{t}^\star.  \]  
As groups with multiplication, we have $K_i^\star \cong \mathbb{Z}_{p^{r_i} - 1}$, where $p^{r_i} = \vert K_i \vert$. Considering the fact that $\mathbb{Z}_a \oplus \mathbb{Z}_b \cong \mathbb{Z}_{ab}$ if and only if $\gcd(a,b)=1$, and since $p - 1\mid p^{r_i} - 1$, for all $i$, we conclude that 
\[ \Rank(  \Aut_{\mathbb{F}}(\mathbb{F}^{\mathbb{Z}_{2^k}})  ) =  \Rank( \mathbb{Z}_{p^{r_1} - 1} \oplus \dots \oplus \mathbb{Z}_{p^{r_t} - 1}) = t \geq k+1.  \] 
By Corollaries \ref{Rank-Lin} and \ref{Cor-Lin},
\[  \Rank( \End_{\mathbb{F}}(\mathbb{F}^{G} ) ) \geq \Rank( \End_{\mathbb{F}}(\mathbb{F}^{\mathbb{Z}_{2^k}} )) \geq \Rank( \Aut_{\mathbb{F}}(\mathbb{F}^{\mathbb{Z}_{2^k}} )) \geq k +1 \]
for all $k \in \mathbb{N}$. Therefore, $\End_{\mathbb{F}}(\mathbb{F}^{G})$ is not finitely generated.

For the case when $\text{char}(\mathbb{F})=2$, we proceed with an analogous argument as in the previous paragraph, except that the normal subgroups $N_k$ of $G$ are chosen such that $G/N_k \cong \mathbb{Z}_{3^k}$, for every $k \in \mathbb{N}$.  

Now let $V$ be a $d$-dimensional vector space over $\mathbb{F}$. As there exists an epimorphism from $G$ to $\mathbb{Z}$, we also have a ring epimorphism from $ \End_{\mathbb{F}}(V^{G}) \cong \End(V)[G]$ to $\End(V)[\mathbb{Z}]$ (see \cite[Corollary 3.2.8]{MS02}). By \cite[Corollary 8.7.8.]{CSC10},  $\End(V)[\mathbb{Z}]$ is isomorphic to the algebra of matrices $\text{Mat}_{d \times d}(\mathbb{F}[\mathbb{Z}])$. As the ring $\mathbb{F}[\mathbb{Z}]$ is commutative, the determinant map is an epimorphism from $\text{Mat}_{d \times d}(\mathbb{F}[\mathbb{Z}])$ to $\mathbb{F}[\mathbb{Z}]$. Hence, by Lemma \ref{le-epi-mon}, $\Rank(\mathbb{F}[\mathbb{Z}]) \leq \Rank(\End(V)[G])$, and the result follows as $\mathbb{F}[\mathbb{Z}]$ is not finitely generated by the argument of the previous paragraph.  
\end{proof}

An invertible element in $\mathbb{F}[G]$ is a \emph{trivial unit} if it has the form $ag \in \mathbb{F}[G]$ with $a \in F^\star$, $g \in G$. The well-known unit conjecture states that if $G$ is a torsion-free group, then $\mathbb{F}[G]$ does not contain non-trivial units; recently, a counter example for this was found in \cite{G21} when $G$ is virtually abelian. However, many groups do satisfy Kaplansky's unit conjecture, such as torsion-free nilpotent groups (\cite[Corollary 8.5.5]{MS02}) and locally indicable groups (\cite{H40}).   

\begin{proposition}\label{last}
Let $G$ be a finitely generated indicable group and let $\mathbb{F}$ be a field. Then, $\Aut_{\mathbb{F}}(\mathbb{F}^{G})$ is finitely generated if and only if $\mathbb{F}$ is a finite field. 
\end{proposition}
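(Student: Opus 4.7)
The plan is to identify $\Aut_{\mathbb{F}}(\mathbb{F}^G)$ with the unit group $\mathbb{F}[G]^\star$ via the algebra isomorphism $\End_{\mathbb{F}}(\mathbb{F}^G)\cong\mathbb{F}[G]$ recalled earlier in the section, and then to treat the two directions of the biconditional separately.

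For the forward direction (if $\Aut_{\mathbb{F}}(\mathbb{F}^G)$ is finitely generated, then $\mathbb{F}$ must be finite), I would observe that this is already a special case of Theorem \ref{infinite-field} applied to the one-dimensional vector space $V=\mathbb{F}$. A more self-contained argument uses the augmentation ring homomorphism $\epsilon:\mathbb{F}[G]\to\mathbb{F}$, $\sum_{g}a_g g\mapsto\sum_{g}a_g$: its restriction to units is surjective onto $\mathbb{F}^\star$ (since $a\cdot 1_{G}\mapsto a$), so Lemma \ref{le-epi-mon}(1) forces $\Rank(\mathbb{F}^\star)\le\Rank(\mathbb{F}[G]^\star)$. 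As $\mathbb{F}^\star$ is never finitely generated for $\mathbb{F}$ infinite, this rules out that case.

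For the reverse direction (if $\mathbb{F}$ is finite, then $\Aut_{\mathbb{F}}(\mathbb{F}^G)$ is finitely generated), I would invoke Higman's theorem \cite{H40}, which resolves Kaplansky's unit conjecture for locally indicable groups: every unit of $\mathbb{F}[G]$ is of the trivial form $a g$ with $a\in\mathbb{F}^\star$ and $g\in G$. Since distinct group elements form an $\mathbb{F}$-basis, this yields $\mathbb{F}[G]^\star\cong\mathbb{F}^\star\times G$. When $\mathbb{F}$ is finite, $\mathbb{F}^\star$ is a finite cyclic group, and combining this with the hypothesis that $G$ is finitely generated gives that $\mathbb{F}^\star\times G$, hence $\Aut_{\mathbb{F}}(\mathbb{F}^G)$, is finitely generated.

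The main obstacle I anticipate is matching the exact hypothesis in the reverse direction: Higman's result really needs $G$ to be \emph{locally indicable} (every nontrivial finitely generated subgroup admits an epimorphism onto $\mathbb{Z}$), a strictly stronger condition than being indicable, since locally indicable forces torsion-freeness. When $G$ has a torsion element $y$ whose order is divisible by $\mathrm{char}(\mathbb{F})$, the nilpotent $y-1$ produces unipotent units of the form $1+(y-1)x$ that are not trivial, so one genuinely needs the locally-indicable hypothesis (or, more generally, that Kaplansky's unit conjecture hold for $\mathbb{F}[G]$) for the above argument to close. Under that strengthening the plan is complete, the only nontrivial ingredient being Higman's theorem; the rest is routine manipulation with the group-ring structure.
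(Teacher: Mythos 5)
Your argument is essentially the paper's own: the reverse direction invokes Higman's theorem to get $\Aut_{\mathbb{F}}(\mathbb{F}^G)\cong \mathbb{F}^\star\times G$ and concludes finite generation from $\Rank(\mathbb{F}^\star)=1$ and $\Rank(G)<\infty$; the forward direction reduces to the fact that $\mathbb{F}^\star$ is not finitely generated when $\mathbb{F}$ is infinite. Your version of the forward direction, via the augmentation epimorphism restricted to units together with Lemma \ref{le-epi-mon}, is in fact slightly more robust than the paper's, which routes that direction through the decomposition $G\oplus\mathbb{F}^\star$ as well and hence leans on the unit conjecture twice.

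The obstacle you anticipate is real, and it is not an artifact of your write-up: the paper's proof opens with ``As $G$ satisfies the unit conjecture\dots'', but the only justification supplied by the preceding discussion is Higman's theorem for \emph{locally} indicable groups, and indicable does not imply locally indicable. Worse, the proposition as stated is actually false. Take $G=\mathbb{Z}\times\mathbb{Z}/2\mathbb{Z}$ (finitely generated and indicable) and $\mathbb{F}=\mathbb{F}_2$. Writing $y$ for the generator of the $\mathbb{Z}/2\mathbb{Z}$ factor and $\epsilon=y-1$, one has $\epsilon^2=0$, so $\mathbb{F}_2[G]\cong R[\epsilon]/(\epsilon^2)$ with $R=\mathbb{F}_2[t,t^{-1}]$, and the unit group is $\{a+b\epsilon: a\in R^\star,\ b\in R\}\cong R^\star\times(R,+)\cong \mathbb{Z}\times\bigoplus_{\mathbb{N}}\mathbb{Z}/2\mathbb{Z}$, which is not finitely generated even though $\mathbb{F}_2$ is finite. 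This is exactly the unipotent phenomenon you describe ($y$ has order divisible by $\mathrm{char}(\mathbb{F})$). So your proposed strengthening of the hypothesis to ``locally indicable'' (or, more generally, to ``$\mathbb{F}[G]$ has only trivial units'') is not merely a matter of matching Higman's hypotheses pedantically --- some such strengthening is necessary for the statement to be true, and with it your proof closes exactly as the paper's does.
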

\begin{proof}
As $G$ satisfies the unit conjecture, then it is clear that $\Aut_{\mathbb{F}}(\mathbb{F}^{G}) \cong G \oplus \mathbb{F}^\star$ and
\[ \Rank(\Aut_{\mathbb{F}}(\mathbb{F}^{G})) \leq\Rank(G) + \Rank(\mathbb{F}^\star).  \]
If $\mathbb{F}$ is a finite field, then $\Rank(\mathbb{F}^\star) = 1$, as $\mathbb{F}^\star$ is cyclic. Hence, $\Rank(\Aut_{\mathbb{F}}(\mathbb{F}^{G})) \leq \Rank(G) + 1$, which shows that $\Aut_{\mathbb{F}}(\mathbb{F}^{G})$ is finitely generated. Conversely, if $\mathbb{F}$ is an infinite field, then $\mathbb{F}^\star$ is not finitely generated (see \cite[p. 46]{H2000}). Since $\Aut_{\mathbb{F}}(\mathbb{F}^{G}) \cong G \oplus \mathbb{F}^\star$ must contain a generating set for $\mathbb{F}^\star$, it follows that $\Aut_{\mathbb{F}}(\mathbb{F}^{G})$ is not finitely generated.
\end{proof}

\begin{remark}
Theorem \ref{LCA} and Proposition \ref{last} show that, for any finitely generated indicable group and finite field $\mathbb{F}$,  $\End_{\mathbb{F}}(\mathbb{F}^{G})$ is not finitely generated but $\Aut_{\mathbb{F}}(\mathbb{F}^{G})$ is finitely generated.  By Lemma \ref{formula-LCA}, we must have that the relative rank of $\Aut_{\mathbb{F}}(\mathbb{F}^{G})$ in $\End_{\mathbb{F}}(\mathbb{F}^{G})$ is infinite. In particular, this is true for $G=\mathbb{Z}^d$, with $d \in \mathbb{N}$, an important case in symbolic dynamics and the theory of cellular automata.  
\end{remark}

\section*{Acknowledgments}

This work was supported by a CONACYTg from the Government of Mexico. The author thanks James East for his kind orientation on ranks of monoids.

%%%%%%%%%%%%%%%%%%%%%%%%%%%%%%%%%%%%%%%%%%%%%%%%%%%%%%%%%%%%%%%%%%%%%%%%%%%%%%%%%%%%%%%%%%%%%

\end{document}